\newcommand{\R}{\mathbb{R}}
\newcommand{\Xt}{X_{\varepsilon,t}}
\newcommand{\Xs}{X_{\varepsilon,s}}
\newtheorem{thm}{Theorem}[section]
\newtheorem{lem}{Lemma}[section]
\newtheorem{prop}{Proposition}[section]
\theoremstyle{definition}
\newtheorem{assum}{Assumption}[section]
\theoremstyle{remark}
\newtheorem{rem}{Remark}[section]
\numberwithin{equation}{section}
\begin{document}
\title{Gaussian fluctuations for stochastic Volterra equations with small noise}
\author{Nguyen Tien Dung\thanks{Department of Mathematics, VNU University of Science, Vietnam National University, Hanoi, 334 Nguyen
Trai, Thanh Xuan, Hanoi, 084 Vietnam.}\,\,\footnote{Corresponding author. Email: dung\_nguyentien10@yahoo.com, dung@hus.edu.vn}
\and Nguyen Thu Hang\thanks{Department of Mathematics, Hanoi University of Mining and Geology, 18 Pho Vien, Bac Tu Liem, Hanoi, 084 Vietnam.}}

\date{\today}

\maketitle
\begin{abstract} In this paper,  we consider a general class of stochastic Volterra equations with small noise. Our aim is to study the fluctuation of the solution around its deterministic limit. We use the techniques of Malliavin calculus to show that the fluctuation process satisfies central limit theorem and provide an optimal estimate for the rate of convergence. An application to stochastic Volterra equations with fractional Brownian motion kernel is given to illustrate the theory.
\end{abstract}
\noindent\emph{Keywords:} Central limit theorem, Stochastic Volterra equations,  Malliavin calculus.\\
{\em 2020 Mathematics Subject Classification:} 60F05, 60H07, 60H20.

\section{Introduction}
For every $\varepsilon\in (0,1)$, we consider stochastic Volterra equations of the form
\begin{equation}\label{hfk}
X_{\varepsilon,t}=x_0+\int_0^tb(t,s,X_{\varepsilon,s})ds+\varepsilon\int_0^t\sigma(t,s,X_{\varepsilon,s})dB_s, \mbox{  }0\le t\le T,
\end{equation}
where the initial data $x_0\in \R,\mbox{  }(B_t)_{t\in [0,T]}$ is a standard Brownian motion and $b,\sigma$ are measurable functions on $[0,T]^2\times \mathbb{R}.$ It is well known that the class of stochastic Volterra equations was first studied by Berger and Mizel in \cite{Berger}. Since then this class has been widely used in the modelling of numerous random phenomena such as fluid turbulence \cite{Chevillard2017}, DNA patterns \cite{Reynaud2010} and mathematical finance \cite{Abi2019b,Omar2019}, etc.

For small values $\varepsilon,$ the Volterra equation (\ref{hfk}) forms a dynamical system with small noise. We recall that the study of asymptotic behaviors of dynamical systems with small noise has a long history beginning in 1970 with the results of  Ventzell \& Freidlin \cite{Ventzell1970}. In this context, one of the fundamental problems is to study the convergence of the considering system to its deterministic limit. The convergence can be described via large deviation principle, central limit theorem and moderate deviation principle, etc.  In fact, the system (\ref{hfk}), the asymptotic behavior of  $X_{\varepsilon,t}$ as $\varepsilon\to 0$ has been investigated by various authors. However, most of papers are devoted to the large and moderate deviation results, see e.g.  \cite{Jacquier2022,Li2017,Nualart2000,Zhang2008} and references therein.

In this paper, our aim is to analyze the Gaussian fluctuation of $X_{\varepsilon,t}$ around its limit. More specifically, we prove a central limit theorem result with explicit estimates for the rate of convergence. It should be noted that, in the last years,  the Gaussian fluctuation for dynamical systems with small noise has been studied by several authors, see e.g. \cite{Bourguin2025,Dung2024,Fan2024} and references therein. We observe that, as $\varepsilon \mbox{ tends to } 0,$ the solution $(X_{\varepsilon,t})_{t\in[0,T]}$ of the equation (\ref{hfk}) converges to a deterministic process $(x_t)_{t\in[0,T]}$ solving the following integral equation
\begin{equation}\label{hfk1}
x_t=x_0+\int_0^tb(t,s,x_s)ds,\mbox{  }0\le t\le T.
\end{equation}
 Let us define the fluctuation process
\begin{equation}\label{hfk3}
\tilde{X}_{\varepsilon,t}:=\frac{X_{\varepsilon,t}-x_t}{\varepsilon},\mbox{  }0\le t\le T.
\end{equation}
It is known from \cite{Qiao2023} that $\tilde{X}_{\varepsilon,t}$ converges to a Gaussian limit $Y_t$ given by
\begin{equation}\label{hfk2}
Y_t=\int_0^tb'(t,s,x_s)Y_sds+\int_0^t\sigma(t,s,x_s)dB_s,\mbox{  }0\le t\le T.
\end{equation}
The results of \cite{Qiao2023} are qualitative and hence, the next important problem is to provide quantitative estimates for the rate of convergence via certain distances. Among others, the Wasserstein, Kolmogorov and total variation distances have been commonly used in the literature. The the Wasserstein distance
$$
d_{W}(\tilde{X}_{\varepsilon,t},Y_t):=\sup_{|g(x)-g(y)|\le |x-y|}|E g(\tilde{X}_{\varepsilon,t})-E g(Y_t)|
$$
is easy to bound by using the estimate (\ref{EXY}) below. On the other hand, the Kolmogorov distance is dominated by total variation distance.  Hence, we will focus on bounding the total variation distance defined by
$$d_{TV}(\tilde{X}_{\varepsilon,t},Y_t):= 
 \frac{1}{2}\sup_{g}|E g(\tilde{X}_{\varepsilon,t})- E g(Y_t)|, $$
where the supremum is running over all measurable functions bounded by $1.$  Our main tools are the techniques of Malliavin calculus. Particularly, we use a general result established in our previous paper \cite{Dung2022} to obtain the optimal estimate of the form
$$d_{TV}(\tilde{X}_{\varepsilon,t},Y_t)\sim O(\varepsilon)\,\,\,\text{as}\,\,\varepsilon\to 0.$$
We also would like to emphasize that our results are able to apply to the class of stochastic Volterra equations with singular kernels.


The rest of the paper is organized as follows. In Section \ref{8ghjs}, we recall some fundamental concepts of Malliavin calculus and a general estimate for the total variation distance established in our previous paper \cite{Dung2022}. The main results of the paper are stated and proved in Section \ref{fkm}. In Section \ref{tglo}, we provide an application to stochastic Volterra equations with fractional Brownian motion kernel is given to illustrate the theory.
\section{Preliminaries}\label{8ghjs}
In this section, for the reader's convenience, we recall some elements of Malliavin calculus (for more details see \cite{nualartm2}).	We suppose that $(B_t)_{t\in [0, T]}$ is defined on a complete probability space $(\Omega ,\mathcal{F},\mathbb{F},P)$, where $\mathbb{F}= (\mathcal{F}_t)_{t \in [0,T]}$ is a natural filtration generated by the Brownian motion $B$. For $h \in L^2[0, T]$, we denote by $B(h)$ the Wiener integral $$B(h)= \int_0^Th_tdB_t.$$
Let $\mathcal{S}$ denote a dense subset of $L^2(\Omega ,\mathcal{F}, P)$ that consists of smooth random variables of the form
\begin{align}
	F = f(B(h_1),B(h_2),..., B(h_n) ),\label{iik}
\end{align}
where $n \in \mathbb{N}, f \in C_0^\infty(\mathbb{R}^n), h_1, h_2, ..., h_n \in L^2[0,T]$. If $F$ has the form (\ref{iik}), we define its Malliavin derivative as the process $DF:= {D_tF, t\in [0,T]}$ given by
$$D_tF =\sum_{k=1}^{n}\frac{\partial f}{\partial x_k}(B(h_1),B(h_2),..., B(h_n))h_k(t).$$
More generally, for each $k\ge 1,$ we can define the iterated derivative operator on a cylindrical random variable by setting
$$ D_{t_1,...,t_k}^{k}F=D_{t_1}...D_{t_k}F. $$
For any $1 \le p,k< \infty$, we denote by $\mathbb{D}^{k,p}$ the closure of $\mathcal{S}$ with respect to the norm
$$||F||_{k,p}^p:= E|F|^p + E\left[\bigg(\int_0^T|D_sF|^2ds\bigg)^{\frac{p}{2}}\right]+...+E\left[\bigg(\int_0^T...\int_0^T|D^{k}_{t_1,...,t_k}F|^2dt_1...dt_k\bigg)^{\frac{p}{2}}\right].$$
A random variable $F$ is said to be Malliavin differentiable if it belongs to $\mathbb{D}^{1,2}$. An important operator in the Malliavin's calculus theory is the divergence operator $\delta$. It is the adjoint of derivative operator $D.$ The domain of $\delta$ is the set of all functions $u\in L^2(\Omega \times [0,T])$ such that
$$ E|\langle DF,u\rangle _{L^2[0,T]}|\le C(u)\|F\|_{L^2(\Omega)}, $$
where $C(u)$ is some positive constant depending on $u.$ In particular, if $u\in Dom\delta ,$ then $\delta(u)$ is characterized by following duality relationships
\begin{align}
\delta(uF)&=F\delta(u)-\left\langle DF,u\right\rangle_{L^2[0,T]}\label{ct*}\\
E[\left\langle DF,u\right\rangle_{L^2[0,T]}]&=E[F\delta (u)] \mbox{ for any } F\in \mathbb{D}^{1,2}.\label{ct**}
\end{align}
We have the following general estimate for the total variation distance.
\begin{lem} \label{dltq}Let $F_1 \in \mathbb{D}^{2,4}$ be such that  $\|DF_1\|_{L^2[0,T]}>0\,\,a.s.$ Then, for any random variable $F_2\in \mathbb{D}^{1,2}$ and any measurable function $\phi$ bounded by $1,$  we have
\begin{align}
&|E\phi(F_1)-E\phi(F_2)|\notag\\
&\leq C \left(E\|DF_1\|^{-8}_{L^2[0,T]}E\left(\int_0^T\int_0^T|D_\theta D_rF_1|^2d\theta dr\right)^2+(E\|DF_1\|^{-2}_{L^2[0,T]})^2\right)^{\frac{1}{4}}\|F_1-F_2\|_{1,2},\label{uu4}
\end{align}
provided that the expectations exist, where $C$ is an absolute constant.
\end{lem}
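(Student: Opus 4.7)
The plan is a Malliavin integration-by-parts argument. I would first reduce by a standard density argument -- approximating $\phi$ by smooth functions $\phi_n$ with $|\phi_n|\le 1$ in $L^1$ of the laws of $F_1$ and $F_2$ -- to the case $\phi\in C_b^\infty(\R)$; the estimate then passes to the limit because its right-hand side depends on $\phi$ only through the bound $|\phi|\le 1$. Set $\Phi(x):=\int_0^x\phi(y)\,dy$, so that $\Phi$ is Lipschitz with constant $1$; in particular, $|\Phi(F_1)-\Phi(F_2)|\le|F_1-F_2|$.

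The key pointwise identity expresses $\phi(F_1)-\phi(F_2)$ using only Malliavin objects attached to $F_1$. From the chain rule $D\Phi(F_i)=\phi(F_i)DF_i$,
\[
\langle D(\Phi(F_1)-\Phi(F_2)),DF_1\rangle_{L^2[0,T]} = \phi(F_1)\|DF_1\|^2_{L^2[0,T]} - \phi(F_2)\langle DF_1,DF_2\rangle_{L^2[0,T]},
\]
and dividing by $\|DF_1\|^2_{L^2[0,T]}>0$ (a.s., by hypothesis) and regrouping gives
\[
\phi(F_1)-\phi(F_2) = \frac{\langle D(\Phi(F_1)-\Phi(F_2)),DF_1\rangle_{L^2[0,T]}}{\|DF_1\|^2_{L^2[0,T]}} - \phi(F_2)\,\frac{\langle DF_1,D(F_1-F_2)\rangle_{L^2[0,T]}}{\|DF_1\|^2_{L^2[0,T]}}.
\]
After taking expectations, the second summand is handled directly: $|\phi|\le 1$ and two applications of Cauchy-Schwarz (first in $L^2[0,T]$, then in $L^2(\Omega)$) yield a bound of $(E\|DF_1\|^{-2}_{L^2[0,T]})^{1/2}\|F_1-F_2\|_{1,2}$. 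For the first summand I would apply the Malliavin duality (\ref{ct**}) with $u:=DF_1/\|DF_1\|^2_{L^2[0,T]}$ and $G:=\Phi(F_1)-\Phi(F_2)$, rewriting it as $E[(\Phi(F_1)-\Phi(F_2))\,\delta(u)]$; the Lipschitz estimate and Cauchy-Schwarz then give a bound of $\|F_1-F_2\|_{1,2}\,\|\delta(u)\|_{L^2(\Omega)}$.

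The main technical step, which I expect to be the harder part, is bounding $\|\delta(u)\|_{L^2(\Omega)}$ via quantities of $F_1$. I would invoke the standard inequality $E|\delta(u)|^2 \le E\|u\|^2_{L^2[0,T]} + E\|Du\|^2_{L^2([0,T]^2)}$ valid for $u\in\mathbb{D}^{1,2}(L^2[0,T])$. The first term equals $E\|DF_1\|^{-2}_{L^2[0,T]}$. For the second, the product rule yields
\[
D_\theta u_r = -\frac{2\langle D_\theta DF_1,DF_1\rangle_{L^2[0,T]}}{\|DF_1\|^4_{L^2[0,T]}}\,D_rF_1 + \frac{D_\theta D_rF_1}{\|DF_1\|^2_{L^2[0,T]}},
\]
and after Cauchy-Schwarz in the time variable, squaring, and integrating over $(\theta,r)\in[0,T]^2$ one obtains $\|Du\|^2_{L^2([0,T]^2)} \le C\,\|DF_1\|^{-4}_{L^2[0,T]}\int_0^T\!\int_0^T|D_\theta D_rF_1|^2\,d\theta\,dr$. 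A further Cauchy-Schwarz in $\Omega$ yields
\[
E|\delta(u)|^2 \le E\|DF_1\|^{-2}_{L^2[0,T]} + C\,(E\|DF_1\|^{-8}_{L^2[0,T]})^{1/2}\Big(E\big({\textstyle\int_0^T\!\int_0^T}|D_\theta D_rF_1|^2d\theta dr\big)^2\Big)^{1/2}.
\]
Combining all the pieces and using the elementary inequalities $\sqrt{a}+\sqrt{b}\le\sqrt{2(a+b)}$ and $(a+b)^{1/2}\le 2^{1/4}(a^2+b^2)^{1/4}$ reproduces the claimed bound with the exponent $1/4$. The only remaining technicality is to verify that $u\in\mathbb{D}^{1,2}(L^2[0,T])\cap\mathrm{Dom}(\delta)$, which follows from $F_1\in\mathbb{D}^{2,4}$ together with the implicit integrability of the negative moments of $\|DF_1\|_{L^2[0,T]}$ appearing in the right-hand side of the bound.
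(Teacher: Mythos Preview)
Your argument is correct and matches the approach underlying the cited result: the paper does not give an independent proof but simply refers to Theorem~3.1 of \cite{Dung2022}, and the key identity from that reference (quoted later in the proof of Theorem~\ref{theorem2} as ``relation (3.2)'') is exactly your decomposition
\[
E[\phi(F_2)]-E[\phi(F_1)]=E\Big[(\Phi(F_2)-\Phi(F_1))\,\delta\!\big(\tfrac{DF_1}{\|DF_1\|^2}\big)\Big]-E\Big[\phi(F_2)\,\tfrac{\langle DF_2-DF_1,DF_1\rangle}{\|DF_1\|^2}\Big],
\]
after applying duality to the first term. Your subsequent estimates on $\|\delta(DF_1/\|DF_1\|^2)\|_{L^2(\Omega)}$ via the Meyer-type inequality and the computation of $D_\theta u_r$ are standard and correct, and combining the pieces with $\sqrt{a}+\sqrt{b}\le\sqrt{2}\,(a+b)^{1/2}$ reproduces the $1/4$-power bound as stated.
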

\begin{proof}See Theorem 3.1 in our recent paper \cite{Dung2022}.
\end{proof}
\section{The main results}\label{fkm}
In this section, we follow \cite{Zhang2008} to impose the following assumptions. Hereafter we use the notations
\begin{align*}
b'(t,s,x)&=\dfrac{\partial b(t,s,x)}{\partial x};\, \,\sigma'(t,s,x)=\dfrac{\partial \sigma(t,s,x)}{\partial x};\\
b''(t,s,x)&=\dfrac{\partial ^2b(t,s,x)}{\partial x^2};\, \,\sigma''(t,s,x)=\dfrac{\partial^2 \sigma(t,s,x)}{\partial x^2}.
\end{align*}
\begin{assum}\label{assum1}
The coefficients $b,\sigma:[0,T]\times[0,T]\times\R\to \R$ have linear growth, i.e.
$$|b(t,s,x)|+|\sigma(t,s,x)|\leq k_1(t,s)(1+|x|)\mbox{  }\forall x\in\R,0\le s, t\le T,$$
and $b(t,s,x),\sigma(t,s,x)$ are differentiable functions in $x$ with bounded derivatives
$$|b'(t,s,x)|+|\sigma'(t,s,x)|\leq k_2(t,s)\mbox{  }\forall x,y\in\R,0\le s, t\le T,$$
where $k_1(t,s)$ and $k_2(t,s)$ are non-negative measurable functions on $[0,T]\times [0,T]$ such that
\begin{equation}\label{udp}
\sup_{t\in[0,T]} \int_0^{t}\left(k_1^{2\alpha}(t,s)+k_2^{2\beta}(t,s)\right)ds\leq L<\infty
\end{equation}
 for some $L>0$ and $\alpha,\beta>1.$
\end{assum}

\begin{assum}\label{assum2}
$b(t,s,x),\sigma(t,s,x)$ are twice differentiable functions in $x,$ and satisfy
$$|b''(t,s,x)|+|\sigma''(t,s,x)|\leq k_3(t,s)\mbox{  }\forall x\in\R,0\le s, t\le T,$$
where $k_3(t,s)$ is non-negative measurable function on $[0,T]\times [0,T]$ such that
\begin{equation}\label{udpw}
\sup_{t\in[0,T]} \int_0^{t}k_3^{2\gamma}(t,s)ds\leq L<\infty
\end{equation}
 for some $L>0$ and $\gamma>1.$
\end{assum}
The main results of this paper are stated in the following theorems.
\begin{thm}\label{theorem1}
Suppose Assumptions \ref{assum1} and \ref{assum2}.  We consider the stochastic processes $(\tilde{X}_{\varepsilon,t})_{0\le t\le T}$ and $(Y_t)_{0\le t\le T}$ defined by (\ref{hfk3}) and (\ref{hfk2}), respectively. Then, we have
\begin{align*}
d_{TV}(\tilde{X}_{\varepsilon,t},Y_t)&\le  \Bigg(\bigg(\sup_{0\leq u\leq t}\int_0^uk_2^2(u,s)ds\bigg)+\bigg(\sup_{0\leq u\leq t}\int_0^uk_3^2(u,s)ds\bigg)\Bigg)^{\frac{1}{2}}\\
&\times\left( \sup_{0\leq u\leq t}\int_0^{u}k_1^2(u,s)ds\right)^{\frac{1}{2}}\frac{C\varepsilon}{\sqrt{{\rm Var}(Y_t)}},\,\mbox{  }\forall \varepsilon\in(0,1),\mbox{  }0< t\le T,
\end{align*}
where $C$ is a positive constant not depending on $t$ and $\varepsilon.$
\end{thm}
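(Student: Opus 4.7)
The plan is to invoke Lemma \ref{dltq} with $F_1=Y_t$ and $F_2=\tilde{X}_{\varepsilon,t}$. The choice $F_1=Y_t$ is crucial: since $Y_t$ solves the linear Volterra equation (\ref{hfk2}) with deterministic coefficients, it lies in the first Wiener chaos. Hence $D_rY_t$ is deterministic and solves a deterministic linear Volterra equation, $D^2Y_t\equiv 0$, and $\|DY_t\|_{L^2[0,T]}^2=\mathrm{Var}(Y_t)$ is a positive constant. The first term inside the parenthesis of (\ref{uu4}) therefore vanishes while the second collapses to $(\mathrm{Var}(Y_t))^{-2}$, so the lemma reduces to
$$d_{TV}(\tilde{X}_{\varepsilon,t},Y_t)\le \frac{C}{\sqrt{\mathrm{Var}(Y_t)}}\,\|\tilde{X}_{\varepsilon,t}-Y_t\|_{1,2}.$$
All the remaining work is to control $\|\tilde{X}_{\varepsilon,t}-Y_t\|_{1,2}$ by $\varepsilon$ times the prescribed constant.

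Set $Z_{\varepsilon,t}=\tilde{X}_{\varepsilon,t}-Y_t$. Subtracting (\ref{hfk2}) from $[(\ref{hfk})-(\ref{hfk1})]/\varepsilon$ and applying a second-order Taylor expansion of $b$ and a first-order expansion of $\sigma$ around $x_s$ yields
$$Z_{\varepsilon,t}=\int_0^t b'(t,s,x_s)Z_{\varepsilon,s}\,ds + R^b_{\varepsilon}(t) + \int_0^t\bigl[\sigma(t,s,X_{\varepsilon,s})-\sigma(t,s,x_s)\bigr]\,dB_s,$$
where the Taylor remainder satisfies $|R^b_\varepsilon(t)|\le \tfrac{\varepsilon}{2}\int_0^tk_3(t,s)\tilde{X}_{\varepsilon,s}^2\,ds$ by Assumption \ref{assum2} and the diffusion difference is pointwise bounded by $\varepsilon\,k_2(t,s)|\tilde{X}_{\varepsilon,s}|$ by Assumption \ref{assum1}. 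Applying Burkholder-Davis-Gundy to the stochastic integral, Hölder's inequality to the Lebesgue terms, and the a priori moment bound $\sup_{u\le t}E|\tilde{X}_{\varepsilon,u}|^p\le C\bigl(\sup_{u\le t}\int_0^u k_1^2(u,s)\,ds\bigr)^{p/2}$ (which follows from Assumption \ref{assum1} and BDG applied to the noise term of $\tilde{X}_{\varepsilon,\cdot}$), and then invoking a Volterra-type Gronwall lemma on $t\mapsto E|Z_{\varepsilon,t}|^2$, produces the desired $O(\varepsilon)$ bound on $\|Z_{\varepsilon,t}\|_{L^2(\Omega)}$ with the announced dependence on the $k_i$.

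The Malliavin-derivative part of $\|Z_{\varepsilon,t}\|_{1,2}$ is handled analogously. Applying $D_r$ to both Volterra equations produces linear Volterra equations for $D_r\tilde{X}_{\varepsilon,t}$ and $D_rY_t$ with the same leading kernel $b'(t,s,\cdot)$ (plus an $O(\varepsilon)$ perturbation coming from $\sigma'$) and with ``initial'' contributions $\sigma(t,r,X_{\varepsilon,r})$ and $\sigma(t,r,x_r)$, respectively. Subtracting, controlling the difference of the initial terms by $\varepsilon k_2(t,r)|\tilde{X}_{\varepsilon,r}|$, and repeating the BDG / moment / Volterra-Gronwall argument yields the same $O(\varepsilon)$ control on $E\int_0^T|D_rZ_{\varepsilon,t}|^2\,dr$. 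Adding the two estimates gives the required bound on $\|Z_{\varepsilon,t}\|_{1,2}$.

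The main obstacle will be the Volterra-Gronwall bookkeeping: because the kernel $b'(t,s,x_s)$ depends on both arguments and the integrability of the $k_i$ is only in the generalised sense (\ref{udp})-(\ref{udpw}) with exponents strictly greater than one, the classical Gronwall lemma is inapplicable, and one must iterate using Hölder's inequality together with the conditions $\alpha,\beta,\gamma>1$ in order to close the estimate. A secondary technicality is verifying that $\tilde{X}_{\varepsilon,t}\in\mathbb{D}^{1,2}$ with the required fourth-moment control on $D\tilde{X}_{\varepsilon,t}$; this follows from Assumptions \ref{assum1}-\ref{assum2} but must be argued before the above scheme is applied.
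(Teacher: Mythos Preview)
Your proposal is correct and follows essentially the same route as the paper: invoke Lemma \ref{dltq} with $F_1=Y_t$ (Gaussian, hence $D^2Y_t\equiv 0$ and $\|DY_t\|_{L^2[0,T]}^2=\mathrm{Var}(Y_t)$), then bound $\|\tilde{X}_{\varepsilon,t}-Y_t\|_{1,2}$ via Taylor expansion plus H\"older/BDG/Gronwall, which the paper packages as Propositions \ref{pro1} and \ref{pro2}. One small overstatement: since $F_2=\tilde{X}_{\varepsilon,t}$ need only lie in $\mathbb{D}^{1,2}$, second-moment control of $D\tilde{X}_{\varepsilon,t}$ suffices---no fourth moments are required.
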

\begin{thm}\label{theorem2}
Suppose Assumptions \ref{assum1} and \ref{assum2}.  We additionally assume that $b''(t,s,x)$ are continuous in $x.$ Then, for any continuous and bounded function $\varphi,$ we have
\begin{align}
\lim\limits_{\varepsilon\to 0}\frac{E\varphi(\tilde{X}_{\varepsilon,t}) -E\varphi(Y_t)}{\varepsilon}=\frac{1}{2\mbox{Var}(Y_t)}E[\varphi(Y_t)\delta(Z_tDY_t)],\,\,\,0< t\le T,\label{ct}
\end{align}
where the stochastic process $(Z_t)_{0\leq t\le T}$ is given by
\begin{align}\label{hfk4}
Z_t=\int_0^tb'(t,s,x_s)Z_sds+\int_0^tb''(t,s,x_s)Y_s^2ds+2\int_0^t\sigma'(t,s,x_s)Y_sdB_s,\mbox{  }t\in [0,T].
\end{align}

\end{thm}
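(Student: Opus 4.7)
The plan has three parts: (i) derive the second-order expansion $\tilde{X}_{\varepsilon,t} = Y_t + \frac{\varepsilon}{2}Z_t + \varepsilon \rho_{\varepsilon,t}$ with $\rho_{\varepsilon,t} \to 0$ in $L^p(\Omega)$; (ii) for smooth test functions, Taylor-expand and then transfer the derivative off $\varphi$ via Malliavin duality; (iii) extend to continuous bounded $\varphi$ using Theorem~\ref{theorem1} to control the $1/\varepsilon$ factor.

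For (i), I would apply the integral form of Taylor's theorem to $b(t,s,X_{\varepsilon,s})-b(t,s,x_s)$ at second order and to $\sigma(t,s,X_{\varepsilon,s})-\sigma(t,s,x_s)$ at first order inside equation \eqref{hfk}, subtract \eqref{hfk1}, \eqref{hfk2} and $\frac{\varepsilon}{2}$ times \eqref{hfk4}, and divide by $\varepsilon$. The residual $\rho_{\varepsilon,t}$ then satisfies a linear Volterra equation whose forcing is a sum of terms such as $\tilde{X}_{\varepsilon,s}^{2}\int_0^1 (1-\theta)\bigl[b''(t,s,x_s+\theta\varepsilon\tilde{X}_{\varepsilon,s})-b''(t,s,x_s)\bigr]\,d\theta$ together with $O(\varepsilon)$ contributions coming from the stochastic integral (where only the boundedness of $\sigma''$ is needed, since the prefactor $\varepsilon$ already provides one extra order of smallness). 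These forcings vanish pointwise by the continuity of $b''$ and are dominated thanks to \eqref{udp}--\eqref{udpw}; dominated convergence together with the Volterra-Gronwall iteration used in Theorem~\ref{theorem1} then yields $\|\rho_{\varepsilon,t}\|_{L^p}\to 0$ for suitable $p$. In particular $G_{\varepsilon,t}:=(\tilde{X}_{\varepsilon,t}-Y_t)/\varepsilon\to\tfrac{1}{2}Z_t$ in $L^p$.

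For (ii), for $\varphi\in C^1_b(\R)$ the mean value theorem gives
\begin{equation*}
\frac{E\varphi(\tilde{X}_{\varepsilon,t})-E\varphi(Y_t)}{\varepsilon} \;=\; \int_0^1 E\bigl[\varphi'(Y_t+\tau\varepsilon G_{\varepsilon,t})\,G_{\varepsilon,t}\bigr]\,d\tau \ \xrightarrow[\varepsilon\to 0]{}\ \tfrac{1}{2}E[\varphi'(Y_t)Z_t].
\end{equation*}
Since $Y_t$ solves a linear Volterra equation with deterministic coefficients it is centred Gaussian, its Malliavin derivative $DY_t$ is deterministic, and $\|DY_t\|^{2}_{L^2[0,T]}=\mbox{Var}(Y_t)$. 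Setting $u:=Z_t\,DY_t/\mbox{Var}(Y_t)$ one has $\langle DY_t,u\rangle_{L^2[0,T]}=Z_t$, and combining the chain rule $D\varphi(Y_t)=\varphi'(Y_t)DY_t$ with the duality formula \eqref{ct**} gives
\begin{equation*}
E[\varphi'(Y_t)Z_t]\;=\;E\bigl[\langle D\varphi(Y_t),u\rangle_{L^2[0,T]}\bigr]\;=\;E[\varphi(Y_t)\delta(u)]\;=\;\frac{1}{\mbox{Var}(Y_t)}\,E[\varphi(Y_t)\delta(Z_tDY_t)],
\end{equation*}
which is exactly \eqref{ct} for $\varphi\in C^1_b$.

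For (iii), I would approximate a general continuous bounded $\varphi$ by mollifications $\varphi_n\in C^\infty_b$ with $\|\varphi_n\|_\infty\le\|\varphi\|_\infty$ and $\varphi_n\to\varphi$ pointwise. The right-hand side of \eqref{ct} depends continuously on $\varphi$ under such convergence by dominated convergence, since $\delta(Z_tDY_t)\in L^1(\Omega)$. For the left-hand side the key tool is Theorem~\ref{theorem1}, which supplies the uniform-in-$\varepsilon$ bound $|E\psi(\tilde{X}_{\varepsilon,t})-E\psi(Y_t)|\le C\varepsilon\,\|\psi\|_\infty$ for every bounded $\psi$; applied to $\psi=\varphi-\varphi_n$ together with a Gaussian-tail cutoff, this allows one to interchange the limits in $\varepsilon$ and $n$. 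The hard part is expected to be step (i): establishing the strong $L^p$ convergence of $\rho_{\varepsilon,t}$ for Volterra equations under the merely integrability-type kernel bounds \eqref{udp}--\eqref{udpw} requires iterating a Volterra-Gronwall inequality on both the Lebesgue and the It\^o-integral parts while keeping the moments of $\tilde{X}_{\varepsilon,t}$ uniformly bounded in $\varepsilon$, which is the technically most delicate ingredient.
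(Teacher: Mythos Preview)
Your Step~(i) coincides with the paper's Step~1 and your Step~(ii) is a correct derivation of \eqref{ct} for $\varphi\in C^1_b$. The genuine gap is in Step~(iii). Theorem~\ref{theorem1} only yields
\[
\Bigl|\varepsilon^{-1}\bigl(E\psi(\tilde X_{\varepsilon,t})-E\psi(Y_t)\bigr)\Bigr|\le C\,\|\psi\|_\infty,
\]
and for a merely continuous bounded $\varphi$ the mollifications $\varphi_n$ do \emph{not} converge in sup-norm on $\R$. The Gaussian-tail cutoff does not rescue this via Theorem~\ref{theorem1} alone: writing $\varphi-\varphi_n=(\varphi-\varphi_n)\chi_R+(\varphi-\varphi_n)(1-\chi_R)$, the compactly supported part is harmless, but for the tail part Theorem~\ref{theorem1} still gives only the useless bound $C\|\varphi\|_\infty$, while a direct Chebyshev estimate on $P(|\tilde X_{\varepsilon,t}|>R)$ leaves the factor $1/\varepsilon$ uncontrolled. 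Hence the interchange of limits in $\varepsilon$ and $n$ is not justified as stated.

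The paper circumvents the smooth-approximation route entirely by applying a Malliavin integration-by-parts identity (relation~(3.2) in \cite{Dung2022}) to the \emph{antiderivative} of $\varphi$, which is Lipschitz as soon as $\varphi$ is bounded. This produces, for every bounded measurable $\varphi$,
\[
E\varphi(\tilde X_{\varepsilon,t})-E\varphi(Y_t)
=\frac{1}{\mathrm{Var}(Y_t)}\,E\!\left[Y_t\!\int_{Y_t}^{\tilde X_{\varepsilon,t}}\!\varphi(z)\,dz\right]
-\frac{1}{\mathrm{Var}(Y_t)}\,E\!\left[\varphi(\tilde X_{\varepsilon,t})\,\langle D\tilde X_{\varepsilon,t}-DY_t,\,DY_t\rangle_{L^2[0,T]}\right].
\]
Dividing by $\varepsilon$ and letting $\varepsilon\to0$ then uses, besides your Step~(i), two further ingredients absent from your plan: the derivative-level estimate $E\|D\tilde X_{\varepsilon,t}-DY_t\|_{L^2[0,T]}^{2}\le C\varepsilon^{2}$ (Proposition~\ref{pro2}) and the ensuing weak convergence $(D\tilde X_{\varepsilon,t}-DY_t)/\varepsilon\rightharpoonup \tfrac12 DZ_t$ in $L^2(\Omega\times[0,T])$. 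Continuity of $\varphi$ enters only through dominated convergence when passing to the limit inside $\int_0^1\varphi\bigl(Y_t+z(\tilde X_{\varepsilon,t}-Y_t)\bigr)\,dz$ and in $\varphi(\tilde X_{\varepsilon,t})-\varphi(Y_t)$. This is the missing idea you need to handle non-smooth test functions directly.
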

\begin{rem}$(i)$ Note that the It\^o stochastic integral $\int_0^t\sigma(t,s,x_s)dB_s$ is a centered Gaussian random variable with finite variance for each $t\in [0,T].$ Hence, it is easy to verify that the equation (\ref{hfk2}) admits a unique solution and this solution satisfies $E[Y_t]=0$ and for every $p\geq 2,$
\begin{align}\label{EY}
 \sup_{t\in[0,T]}E|Y_t|^p<\infty.
\end{align}

\noindent$(ii)$ For $p\ge \max\left\{\frac{2\beta}{\beta-1},\frac{2\gamma}{\gamma-1}\right\}>2,$ by using the H\"older and Burkholder-Davis-Gundy inequalities, we have
\begin{align*}
&E\left|\int_0^tb''(t,s,x_s)Y_s^2ds+2\int_0^t\sigma'(t,s,x_s)Y_sdB_s\right|^p\\
&\le 2^{p-1}\Bigg(E\left|\int_0^tb''(t,s,x_s)Y_s^2ds\right|^p+E\left|2\int_0^t\sigma'(t,s,x_s)Y_sdB_s\right|^p\Bigg)\\
&\le 2^{p-1}\left(tE\int_0^tk_3^2(t,s)Y_s^4ds\right)^{\frac{p}{2}}+2^{2p-1}\left(E\int_0^tk_2^2(t,s)Y_s^2ds\right)^{\frac{p}{2}}\\
&\le 2^{p-1}t^{\frac{p}{2}}\left(\int_0^tk_3^{\frac{2p}{p-2}}(t,s)ds\right)^{\frac{p}{2}-1}\int_0^tE|Y_s|^{2p}ds+ 2^{2p-1}\left(\int_0^tk_2^{\frac{2p}{p-2}}(t,s)ds\right)^{\frac{p}{2}-1}\int_0^tE|Y_s|^{p}ds.
\end{align*}
This, together with the conditions (\ref{udp}) and (\ref{udpw}), yields
$$\sup_{0\le t\le T}E\left|\int_0^tb''(t,s,x_s)Y_s^2ds+2\int_0^t\sigma'(t,s,x_s)Y_sdB_s\right|^p<\infty.$$
Hence, the linear integral equation (\ref{hfk4}) admits a unique solution $(Z_t)_{0\le t\le T}$ satisfying
$$ \sup_{0\le t\le T}E|Z_t|^p<\infty. $$
\end{rem}

\subsection{Estimates for Malliavin derivatives}

Hereafter, we denote by $C$ a generic constant which may vary at each appearance. In our proofs, we frequently use the fundamental inequality
$$ (a_1+...+a_n)^p\le n^{p-1}(a_1^p+...+a_n^p), $$ for all $a_1,...,a_n\ge 0$ and $p\ge 1.$

\begin{prop}\label{pro3.1} Suppose Assumption \ref{assum1}. Let $(\Xt)_{t\in[0,T]}$ be the solution to the equation (\ref{hfk}). Then, for every $p\ge 2,$ we have
\begin{align}
 \sup_{0\le t\le T}E|\Xt|^p\le C\,\,\,\forall\, \varepsilon\in (0,1),\label{es1}
\end{align}
where $C$ is a positive constant not depending on $t$ and $\varepsilon.$
\end{prop}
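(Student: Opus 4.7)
The plan is to derive a Grönwall-type integral inequality for $E|\Xt|^p$, exploiting the linear growth from Assumption \ref{assum1} together with the integrability of $k_1^{2\alpha}$.

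First, I would apply the fundamental inequality $(a_1+a_2+a_3)^p \le 3^{p-1}(a_1^p+a_2^p+a_3^p)$ to the defining equation (\ref{hfk}) to obtain
\begin{equation*}
E|\Xt|^p \le 3^{p-1}\left(|x_0|^p + E\left|\int_0^t b(t,s,\Xs)\,ds\right|^p + \varepsilon^p\, E\left|\int_0^t \sigma(t,s,\Xs)\,dB_s\right|^p\right).
\end{equation*}
Since $\varepsilon^p < 1$, this factor is harmless for obtaining an $\varepsilon$-uniform bound.

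Next, I would estimate each term. For the drift, using $|b(t,s,x)| \le k_1(t,s)(1+|x|)$ followed by H\"older's inequality with exponents $2\alpha$ and $2\alpha/(2\alpha-1)$, I can factor out $\big(\int_0^t k_1^{2\alpha}(t,s)ds\big)^{p/(2\alpha)}$, which is bounded by $L^{p/(2\alpha)}$ uniformly in $t$ by (\ref{udp}). Combined with Jensen's inequality applied to the remaining integral (which is permissible once $p$ is large enough, say $p \ge 2\alpha/(\alpha-1)$), this yields a bound of the form $C\big(1 + \int_0^t E|\Xs|^p\,ds\big)$. For the stochastic integral, the Burkholder--Davis--Gundy inequality gives
\begin{equation*}
E\left|\int_0^t \sigma(t,s,\Xs)\,dB_s\right|^p \le C_p\, E\left(\int_0^t \sigma^2(t,s,\Xs)\,ds\right)^{p/2},
\end{equation*}
and an analogous H\"older-and-Jensen argument reduces this to the same type of bound $C\big(1 + \int_0^t E|\Xs|^p\,ds\big)$, again with constants uniform in $t$ and $\varepsilon$.

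Combining the three contributions leads to the inequality $E|\Xt|^p \le C_1 + C_2 \int_0^t E|\Xs|^p\,ds$, with $C_1,C_2$ depending only on $p,T,L,x_0$. Grönwall's lemma then furnishes the desired uniform bound $\sup_{0\le t \le T} E|\Xt|^p \le C_1 e^{C_2 T}$. For the remaining range $p \in [2, 2\alpha/(\alpha-1))$, Jensen's inequality $E|\Xt|^p \le (E|\Xt|^{p'})^{p/p'}$ with any sufficiently large $p'$ completes the proof.

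The main technical obstacle is justifying that $E|\Xt|^p$ is a priori finite so that Grönwall's lemma may be applied legitimately; the standard remedy is to introduce the stopping times $\tau_N = \inf\{t: |\Xt| \ge N\}$, run the above argument for $E|X_{\varepsilon,t\wedge \tau_N}|^p$ to get a bound independent of $N$, and then let $N \to \infty$ via Fatou's lemma. The rest of the argument is essentially bookkeeping of H\"older exponents to ensure that the $t$-dependent kernel $k_1(t,s)$ is absorbed only through the uniform bound (\ref{udp}).
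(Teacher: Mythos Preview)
Your approach is essentially the same as the paper's: split via $(a_1+a_2+a_3)^p\le 3^{p-1}(\ldots)$, apply BDG to the stochastic integral, use H\"older with the integrability of $k_1^{2\alpha}$ from (\ref{udp}) to absorb the kernel, close a Gr\"onwall loop for large $p$, and then descend to $p\ge 2$ by Lyapunov/Jensen. The only substantive caveat concerns your localization step: for Volterra equations the process $X_{\varepsilon,t\wedge\tau_N}$ does not satisfy a clean truncated equation, because the integrands $b(t,s,\cdot)$ and $\sigma(t,s,\cdot)$ carry the terminal time $t$ as a parameter, so the usual stopping-time argument for It\^o diffusions does not transfer directly. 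The paper sidesteps this by citing \cite{Wang2008} for a priori finiteness of moments (via Picard iteration) and then runs the Gr\"onwall argument only to extract the uniformity in $\varepsilon$; you could do the same, or work with the Picard iterates directly.
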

\begin{proof}We first recall that, under Assumption \ref{assum1}, there exists a unique solution $(\Xt)_{t\in[0,T]}$ to the equation (\ref{hfk}). In addition, the boundedness of moments is well known, see e.g. \cite{Wang2008}. Here we give a proof to show that the moments are bounded uniformly in $\varepsilon\in (0,1).$ We consider $p\geq \frac{2\alpha}{\alpha-1}>2,$ it follows from  Assumption \ref{assum1} and the H\"older and Burkholder-Davis-Gundy inequalities that
\begin{align*}
E\left|X_{\varepsilon,t}\right|^p&\le 3^{p-1}\left(|x_0|^p+E\left|\int_0^tb(t,s,X_{\varepsilon,s})ds\right|^p+E\left|\varepsilon\int_0^t\sigma(t,s,X_{\varepsilon,s})dB_s \right|^p\right)\\
&\le 3^{p-1}\left(|x_0|^p+E\left(t\int_0^t|b(t,s,X_{\varepsilon,s})|^2ds\right)^\frac{p}{2}+C\varepsilon^pE\left(\int_0^t|\sigma(t,s,X_{\varepsilon,s})|^2ds \right)^{\frac{p}{2}}\right)\\
&\le C+CE\left(\int_0^tk_1^2(t,s)(1+|X_{\varepsilon,s}|^2)ds\right)^\frac{p}{2}\,\,\forall\,\varepsilon\in(0,1),t\in[0,T],
\end{align*}
where $C$ is a constant positive depending only on $T,p$ and $x_0.$  Using the H\"{o}lder inequality we have
\begin{align}
 E\left(\int_0^tk_1^2(t,s)(1+|X_{\varepsilon,s}|^2)ds\right)^\frac{p}{2}
&\le \left( \int_0^tk_1^{\frac{2p}{p-2}}(t,s)ds\right)^{\frac{p}{2}-1}\int_0^tE(1+|X_{\varepsilon,s}|^2)^{\frac{p}{2}}ds\notag\\
&\le 2^{\frac{p}{2}-1}\left( \int_0^tk_1^{\frac{2p}{p-2}}(t,s)ds\right)^{\frac{p}{2}-1}\int_0^tE(1+|X_{\varepsilon,s}|^{p})ds.\notag
\end{align}
Furthermore, it follows from the condition (\ref{udp}) that
$$\int_0^tk_1^{\frac{2p}{p-2}}(t,s)ds\leq t^{1-\frac{p}{(p-2)\alpha}}\bigg(\int_0^tk_1^{2\alpha}(t,s)ds\bigg)^{\frac{p}{(p-2)\alpha}}\leq T^{1-\frac{p}{(p-2)\alpha}}L^{\frac{p}{(p-2)\alpha}}.$$
We deduce
\begin{align}
 E\left(\int_0^tk_1^2(t,s)(1+|X_{\varepsilon,s}|^2)ds\right)^\frac{p}{2}
&\le C+C\int_0^{t}E|X_{\varepsilon,s}|^pds\,\,\forall\,\varepsilon\in(0,1),t\in[0,T],\label{ctB}
\end{align}
where $C$ is a constant positive depending only on $L,T,p$ and $\alpha.$
As a consequence, we obtain
\begin{align*}
 E\left|X_{\varepsilon,t}\right|^p\le C+C\int_0^{t}E|X_{\varepsilon,s}|^pds\,\,\forall\,\varepsilon\in(0,1),t\in[0,T].
\end{align*}
 By Gronwall's lemma, we get
$$E|\Xt|^p\le Ce^{Ct}\leq Ce^{CT}\,\,\forall\,\varepsilon\in(0,1),t\in[0,T].$$
So, by Lyapunov's inequality, we conclude that (\ref{es1}) holds true for any $p\geq 2.$ The proof of the proposition is complete.
\end{proof}


\begin{prop}\label{pro3.3}
 Let Assumption \ref{assum1} hold. Let $(\Xt)_{t\in[0,T]}$  be the solution to the equation (\ref{hfk}). Then, for each $0\le t\le T,$ the random variable $\Xt$ is Malliavin differentiable. Moreover, the derivative $D_{\theta}\Xt$ satisfies  $D_{\theta}\Xt =0$ for $\theta>t$ and
\begin{multline}
 D_{\theta}\Xt =\varepsilon \sigma(t,\theta,X_{\varepsilon,\theta})+\int_{\theta}^tb'(t,s,X_{\varepsilon,s})D_{\theta}\Xs ds\\+\varepsilon\int_{\theta}^t\sigma'(t,s,X_{\varepsilon,s})D_{\theta}\Xs dB_s,\mbox{  }0\le \theta\le t\le T.\label{DX}
\end{multline}
\end{prop}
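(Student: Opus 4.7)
The plan is to realize $\Xt$ as the $L^p$-limit of Picard iterates and transfer Malliavin differentiability to the limit via the closability of the operator $D$. Define $X^{(0)}_{\varepsilon,t}=x_0$ and
\begin{equation*}
X^{(n+1)}_{\varepsilon,t}=x_0+\int_0^tb(t,s,X^{(n)}_{\varepsilon,s})ds+\varepsilon\int_0^t\sigma(t,s,X^{(n)}_{\varepsilon,s})dB_s.
\end{equation*}
The $L^p$-convergence $X^{(n)}_{\varepsilon,t}\to\Xt$, uniformly in $t\in[0,T]$ and in $\varepsilon\in(0,1)$, follows from exactly the same H\"older/Burkholder-Davis-Gundy/Gronwall argument as in the proof of Proposition \ref{pro3.1}, using Assumption \ref{assum1} and (\ref{udp}).

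For the induction step, assume $X^{(n)}_{\varepsilon,s}\in\mathbb{D}^{1,2}$ for each $s\in[0,T]$. By the chain rule and the commutation of $D_\theta$ with Lebesgue and It\^o integrals (see Chapter 1 of \cite{nualartm2}), one obtains, for $0\le\theta\le t\le T$,
\begin{equation*}
D_\theta X^{(n+1)}_{\varepsilon,t}=\varepsilon\sigma(t,\theta,X^{(n)}_{\varepsilon,\theta})+\int_\theta^tb'(t,s,X^{(n)}_{\varepsilon,s})D_\theta X^{(n)}_{\varepsilon,s}ds+\varepsilon\int_\theta^t\sigma'(t,s,X^{(n)}_{\varepsilon,s})D_\theta X^{(n)}_{\varepsilon,s}dB_s,
\end{equation*}
while $D_\theta X^{(n+1)}_{\varepsilon,t}=0$ for $\theta>t$ by adaptedness of the iterates.

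The main obstacle is producing a uniform-in-$n$ bound on $E|D_\theta X^{(n)}_{\varepsilon,t}|^p$ in a form amenable to passing to the limit. Taking $p$-th moments in the recursion and applying H\"older, Burkholder-Davis-Gundy, the condition (\ref{udp}) and the uniform moment bound (\ref{es1}), one arrives at an estimate of the form
\begin{equation*}
E|D_\theta X^{(n+1)}_{\varepsilon,t}|^p\le C\varepsilon^pk_1^p(t,\theta)(1+E|X^{(n)}_{\varepsilon,\theta}|^p)+C\int_\theta^tE|D_\theta X^{(n)}_{\varepsilon,s}|^pds,
\end{equation*}
from which one deduces, by Gronwall's lemma in $t$ for each fixed $\theta$ together with an induction in $n$, a bound $\sup_{n}\sup_{\theta\le t\le T}E|D_\theta X^{(n)}_{\varepsilon,t}|^p\le C(1+k_1^p(t,\theta))$ that is integrable in $\theta$ on $[0,T]$. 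This is the genuinely Volterra part of the argument: because $b$ and $\sigma$ depend on both $t$ and $s$, the equation for $D_\theta X^{(n)}_{\varepsilon,\cdot}$ is not Markovian in its upper limit, and the Gronwall estimate has to be iterated along the outer variable rather than extracted from a standard SDE bound.

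Finally, the closability of $D$ on $L^2(\Omega)$ combined with the $L^2$-convergence $X^{(n)}_{\varepsilon,t}\to\Xt$ and the uniform $L^2$-bound on $DX^{(n)}_{\varepsilon,t}$ implies $\Xt\in\mathbb{D}^{1,2}$ and $DX^{(n)}_{\varepsilon,t}\to D\Xt$ weakly in $L^2(\Omega\times[0,T])$. One then passes to the limit in the recursion for the derivatives, using the continuity of $b'(t,s,\cdot)$ and $\sigma'(t,s,\cdot)$ together with their domination by $k_2(t,s)$ to invoke dominated convergence, to recover (\ref{DX}); the identity $D_\theta\Xt=0$ for $\theta>t$ is inherited from the iterates.
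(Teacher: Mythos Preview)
Your overall architecture---Picard iteration, inductive differentiability of the iterates, a uniform bound on their Malliavin derivatives, then passage to the limit via Lemma~1.5.3 in \cite{nualartm2}---is exactly the paper's approach, and the recursion you write for $D_\theta X^{(n+1)}_{\varepsilon,t}$ is correct.

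The gap is in the uniform bound. Your pointwise-in-$\theta$ estimate
\[
E|D_\theta X^{(n+1)}_{\varepsilon,t}|^p\le C\varepsilon^pk_1^p(t,\theta)\bigl(1+E|X^{(n)}_{\varepsilon,\theta}|^p\bigr)+C\int_\theta^tE|D_\theta X^{(n)}_{\varepsilon,s}|^p\,ds
\]
cannot be closed by ``Gronwall in $t$ for each fixed $\theta$'' to give $\sup_nE|D_\theta X^{(n)}_{\varepsilon,t}|^p\le C(1+k_1^p(t,\theta))$. Gronwall applied to $u_\theta(t):=\sup_nE|D_\theta X^{(n)}_{\varepsilon,t}|^p$ yields a term of the form $\int_\theta^t k_1^p(s,\theta)e^{C(t-s)}\,ds$, which requires integrability of $s\mapsto k_1^p(s,\theta)$ in the \emph{first} variable. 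Assumption~\ref{assum1} only controls $\int_0^t k_i^{2\alpha}(t,s)\,ds$, i.e.\ integrability in the \emph{second} variable; for singular Volterra kernels (such as the fractional Brownian kernel in Section~\ref{tglo}) the first-variable integral can diverge. Your stated conclusion is also notationally incoherent: you write $\sup_{\theta\le t\le T}$ on the left while retaining $t$-dependence on the right.

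The paper's fix is to integrate in $\theta$ \emph{before} attempting any Gronwall argument. One first bounds $E|D_\theta X^{(n)}_{\varepsilon,t}|^2$ at the $L^2$ level, integrates to obtain a recursion for $\phi_n(t):=\int_0^tE|D_\theta X^{(n)}_{\varepsilon,t}|^2\,d\theta$ of the form $\phi_n(t)\le C\varepsilon^2\int_0^tk_1^2(t,\theta)\,d\theta+C\int_0^tk_2^2(t,s)\phi_{n-1}(s)\,ds$, and only then raises to the $p/2$-th power and applies H\"older to separate the $k_2$-factor using (\ref{udp}). This reduces to a genuine Gronwall inequality for $\sup_n\phi_n(t)^{p/2}$, and all kernel integrals that appear are in the second variable, hence controlled by Assumption~\ref{assum1}.
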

\begin{proof}
Consider the Picard approximations given by
\begin{align*}
X^{(0)}_{\varepsilon,t}&=x_0\\
X^{(n)}_{\varepsilon,t}&=x_0+\int_0^tb(t,s,X^{(n-1)}_{\varepsilon,s})ds+\varepsilon\int_0^t\sigma(t,s,X^{(n-1)}_{\varepsilon,s})dB_s,\mbox{  }n\ge 1.
\end{align*}
It is easy to verify that $  \sup_{n\geq 0}\sup_{0\le t\le T}E|X^{(n)}_{\varepsilon,t}|^p<\infty\,\,\forall\, \varepsilon\in (0,1)$ and $p\geq 2. $ Moreover,
$$X^{(n)}_{\varepsilon,t}\to X_{\varepsilon,t}\,\,\,\text{in}\,\,L^p(\Omega)\,\,\text{as}\,\,n\to\infty.$$
(The proof is similar to that of Theorem 1.1 in \cite{Wang2008}). Fixed $t\in [0,T],$ by the induction argument, we can show that $X^{(n)}_{\varepsilon,t} $ belongs to $\mathbb{D}^{1,2}$ for all $n\ge 0.$ Moreover, $  D_{\theta}X^{(n)}_{\varepsilon,t}=0 \mbox{ for } \theta>t,$ and for $0\le \theta\le t\le T,$ we have
\begin{align*}
  D_{\theta}X^{(n)}_{\varepsilon,t}&= \varepsilon \sigma(t,\theta,X^{(n-1)}_{\varepsilon,\theta})+\int_{\theta}^tD_\theta[b(t,s,X^{(n-1)}_{\varepsilon,s})] ds+\varepsilon\int_{\theta}^tD_\theta[\sigma(t,s,X^{(n-1)}_{\varepsilon,s})]dB_s\\
&=\varepsilon \sigma(t,\theta,X^{(n-1)}_{\varepsilon,\theta})+\int_{\theta}^t b'(t,s,X^{(n-1)}_{\varepsilon,s})D_\theta X^{(n-1)}_{\varepsilon,s}ds+\varepsilon\int_{\theta}^t \sigma'(t,s,X^{(n-1)}_{\varepsilon,s})D_\theta X^{(n-1)}_{\varepsilon,s}dB_s.
\end{align*}
We now claim that
\begin{equation}\label{7gt}
\sup\limits_n E\|DX^{(n)}_{\varepsilon,t}\|^2_{L^2[0,T]}<\infty,\mbox{  }0\le t\le T.
\end{equation}
We have
\begin{align*}
E|  D_{\theta}X^{(n)}_{\varepsilon,t}|^2&\le 3\Bigg[E|\varepsilon \sigma(t,\theta,X^{(n-1)}_{\varepsilon,\theta})|^{2}+E\left|\int_{\theta}^t b'(t,s,X^{(n-1)}_{\varepsilon,s})D_\theta X^{(n-1)}_{\varepsilon,s}ds\right|^{2}\\
&+E\left|\varepsilon\int_{\theta}^t \sigma'(t,s,X^{(n-1)}_{\varepsilon,s})D_\theta X^{(n-1)}_{\varepsilon,s}dB_s\right|^{2}\Bigg]\\
&\le 3E|\varepsilon \sigma(t,\theta,X^{(n-1)}_{\varepsilon,\theta})|^{2}+3t\int_{\theta}^tE|b'(t,s,X^{(n-1)}_{\varepsilon,s})D_\theta X^{(n-1)}_{\varepsilon,s}|^{2}ds\\
&+3\varepsilon^2\int_{\theta}^tE|\sigma'(t,s,X^{(n-1)}_{\varepsilon,s})D_\theta X^{(n-1)}_{\varepsilon,s}|^2ds,\,\,0\le \theta\le t\le T.
\end{align*}
By Assumption \ref{assum1}, we get
\begin{align*}
E|  D_{\theta}X^{(n)}_{\varepsilon,t}|^2&\le 6\varepsilon^2 k_1^2(t,\theta)(1+E|X^{(n-1)}_{\varepsilon,\theta}|^2)+3(T+\varepsilon^2)\int_{\theta}^tk_2^2(t,s)E|D_\theta X^{(n-1)}_{\varepsilon,s}|^2ds\\
&\le C\varepsilon^2 k_1^2(t,\theta)+C\int_{\theta}^tk_2^2(t,s)E|D_\theta X^{(n-1)}_{\varepsilon,s}|^2ds,\,\,0\le \theta\le t\le T,
\end{align*}
where $C$ is the positive constant not depending on $n,\varepsilon\mbox{ and }t$. We obtain
$$\int_0^t E| D_{\theta}X^{(n)}_{\varepsilon,t} |^2d\theta\le C\varepsilon^2\int_0^t k_1^2(t,\theta)d\theta+C\int_0^t k_2^2(t,s)\int_0^sE|D_\theta X^{(n-1)}_{\varepsilon,s}|^2d\theta ds,\,\,\,0\le t\le T.$$
Fixed $p\ge \frac{2\beta}{\beta-1}>2$, by using the same arguments as in the proof of (\ref{ctB}), we get
\begin{align*}
\left(\int_0^{t} E| D_{\theta}X^{(n)}_{\varepsilon,t} |^2d\theta\right)^{\frac{p}{2}}
&\le 2^{\frac{p}{2}-1}\Bigg(\left(\int_0^{t}C\varepsilon^2k_1^2(t,\theta)d\theta\right)^{\frac{p}{2}}+\left( C\int_0^tk_2^2(t,s)\int_0^s |D_\theta X^{(n-1)}_{\varepsilon,s}|^2d\theta ds\right)^{\frac{p}{2}}\Bigg)\\
&\le C\left( \sup_{0\leq u\leq t}\int_0^{u}k_1^2(u,\theta)d\theta\right)^{\frac{p}{2}}\varepsilon^p +C\int_0^t\left(\int_0^s|D_\theta X^{(n-1)}_{\varepsilon,s}|^2d\theta\right)^{\frac{p}{2}}ds.
\end{align*}
As a consequence
\begin{multline}\label{ct10}
\left(E\|DX^{(n-1)}_{\varepsilon,t}\|^2_{L^2[0,T]}\right)^{\frac{p}{2}}\le C\left( \sup_{0\leq u\leq t}\int_0^{u}k_1^2(u,\theta)d\theta\right)^{\frac{p}{2}}\varepsilon^p\\+C\int_0^{t}\left(E\|DX^{(n-1)}_{\varepsilon,s}\|^2_{L^2[0,T]}\right)^{\frac{p}{2}}ds,\,\,0\le t\le T.
\end{multline}
Put $u(t)=\sup\limits_n \left(E\|DX^{(n)}_{\varepsilon,t}\|^2_{L^2[0,T]}\right)^{\frac{p}{2}}.$ Taking supremum two side of (\ref{ct10}) we get
$$
u(t)\le C\left( \sup_{0\leq u\leq t}\int_0^{u}k_1^2(u,\theta)d\theta\right)^{\frac{p}{2}}\varepsilon^{p}+C\int_0^t u(s)ds,\mbox{  }0\le t\le T.
$$
Then, using Gronwall's lemma, we obtain
\begin{equation}\label{ser}u(t)\le C\left( \sup_{0\leq u\leq t}\int_0^{u}k_1^2(u,\theta)d\theta\right)^{\frac{p}{2}}\varepsilon^{p}e^{Ct}<\infty,\mbox{  }0\le t\le T,\mbox{  }\varepsilon\in (0,1).
\end{equation}
This implies that the claim (\ref{7gt}) holds true. So, in view of Lemma 1.5.3 in \cite{nualartm2}, we conclude that for any $t\in [0,T], X_{\varepsilon,t}$ is Malliavin differentiable and its derivative satisfies the equation (\ref{DX}).

The proof of the proposition is complete.
\end{proof}

\begin{prop}\label{pro3.4}Suppose Assumption \ref{assum1}. Let $(\Xt)_{t\in[0,T]}$  be the solution to the equation (\ref{hfk}).  Then, we have
\begin{align}\label{DX1}
\int_0^{t} E| D_{\theta}\Xt |^2d\theta\le C\left( \sup_{0\leq u\leq t}\int_0^{u}k_1^2(u,\theta)d\theta\right)\varepsilon^{2}\, \mbox{  }\forall \varepsilon\in (0,1),
\end{align}
where $C$ is a positive constant not depending on $t\mbox{ and }\varepsilon.$
\end{prop}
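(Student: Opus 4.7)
The plan is to derive the desired bound directly from the linear Malliavin derivative equation (\ref{DX}) proved in Proposition \ref{pro3.3}, mimicking the Picard-iteration argument already carried out there but now on the true solution $\Xt$ itself. (Alternatively one could just invoke Fatou/lower semicontinuity on the bound (\ref{ser}), but a direct derivation keeps the constants transparent.)

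First I would bound $E|D_\theta\Xt|^2$ pointwise in $\theta$. Starting from (\ref{DX}), I apply $(a+b+c)^2\le 3(a^2+b^2+c^2)$, take expectations, and use Cauchy--Schwarz on the Lebesgue integral together with It\^o's isometry on the stochastic integral. Using the linear growth of $\sigma$, the bound $|\sigma'|\le k_2(t,s)$ from Assumption \ref{assum1}, and the uniform moment estimate $\sup_{0\le \theta\le T}E|X_{\varepsilon,\theta}|^2\le C$ from Proposition \ref{pro3.1}, this yields
\begin{equation*}
E|D_\theta\Xt|^2\le C\varepsilon^2 k_1^2(t,\theta)+C\int_\theta^t k_2^2(t,s)\,E|D_\theta\Xs|^2\,ds,\quad 0\le\theta\le t\le T,
\end{equation*}
with $C$ independent of $\varepsilon$ and $t$. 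Next I integrate in $\theta\in[0,t]$ and switch the order of integration in the double integral. Setting $G_\varepsilon(t):=\int_0^t E|D_\theta\Xt|^2\,d\theta$, this produces
\begin{equation*}
G_\varepsilon(t)\le C\varepsilon^2\int_0^t k_1^2(t,\theta)\,d\theta+C\int_0^t k_2^2(t,s)\,G_\varepsilon(s)\,ds.
\end{equation*}

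The one subtle point, which is also the main obstacle, is that the kernel $k_2^2(t,s)$ still carries the outer variable $t$, so the standard Gronwall lemma does not apply directly to the function $G_\varepsilon$. I would circumvent this exactly as in the proof of Proposition \ref{pro3.3}: fix $p\ge \frac{2\beta}{\beta-1}>2$, raise the previous inequality to the power $p/2$, and estimate the second term by H\"older's inequality with conjugate exponents $\frac{p}{p-2}$ and $\frac{p}{2}$, namely
\begin{equation*}
\left(\int_0^t k_2^2(t,s) G_\varepsilon(s)\,ds\right)^{\!p/2}\le \left(\int_0^t k_2^{2p/(p-2)}(t,s)\,ds\right)^{\!(p-2)/2}\int_0^t G_\varepsilon(s)^{p/2}\,ds.
\end{equation*}
A further H\"older step reduces $\int_0^t k_2^{2p/(p-2)}(t,s)\,ds$ to a constant via condition (\ref{udp}); the first term is bounded by $C\varepsilon^p\bigl(\sup_{0\le u\le t}\int_0^u k_1^2(u,\theta)\,d\theta\bigr)^{p/2}$ after taking the supremum inside.

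Combining these estimates, I obtain
\begin{equation*}
G_\varepsilon(t)^{p/2}\le C\varepsilon^p\left(\sup_{0\le u\le t}\int_0^u k_1^2(u,\theta)\,d\theta\right)^{\!p/2}+C\int_0^t G_\varepsilon(s)^{p/2}\,ds,
\end{equation*}
and an application of Gronwall's lemma yields $G_\varepsilon(t)^{p/2}\le C\varepsilon^p\bigl(\sup_{0\le u\le t}\int_0^u k_1^2(u,\theta)\,d\theta\bigr)^{p/2}e^{Ct}$. Taking the $(2/p)$-th power and absorbing $e^{2Ct/p}\le e^{2CT/p}$ into the constant gives exactly (\ref{DX1}), with $C$ independent of $t$ and $\varepsilon\in(0,1)$.
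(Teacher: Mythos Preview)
Your proposal is correct and is essentially the same approach as the paper's: the paper simply says the proof is ``similar to that of (\ref{ser})'', and you have faithfully reproduced that argument (the $L^2$ estimate on $D_\theta\Xt$, integration in $\theta$, the H\"older trick with exponent $p\ge 2\beta/(\beta-1)$ to remove the $t$-dependence in $k_2^2(t,s)$, and Gronwall) directly on the true solution rather than on the Picard iterates. Your parenthetical remark about passing to the limit in (\ref{ser}) via lower semicontinuity would also have sufficed.
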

\begin{proof}The proof is similar to that of (\ref{ser}). So we omit it.
\end{proof}

\subsection{Proof of Theorem \ref{theorem1}}
The proof of Theorem \ref{theorem1} will be given at the end of this subsection. Let us first prepare some technical results.
\begin{prop}\label{pro3.2}
 Let Assumption \ref{assum1} hold. Let $(\Xt)_{t\in[0,T]}$ and $(x_t)_{t\in[0,T]}$ be the solution to the equations (\ref{hfk}) and (\ref{hfk1}), respectively. Then, for all $p\ge 2$ we have
\begin{equation}\label{EX1}
E|X_{\varepsilon,t}-x_t|^p \le C\left( \sup_{0\leq u\leq t}\int_0^{u}k_1^2(u,s)ds\right)^{\frac{p}{2}}\varepsilon^p\,\,\forall\, \varepsilon\in (0,1), 0\le t\le T,
\end{equation}
where $C$ is a positive constant not depending on $t$ and $\varepsilon.$
\end{prop}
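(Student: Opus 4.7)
The plan is to subtract (\ref{hfk1}) from (\ref{hfk}) to get the clean decomposition
\begin{equation*}
X_{\varepsilon,t}-x_t=\int_0^t\bigl[b(t,s,X_{\varepsilon,s})-b(t,s,x_s)\bigr]ds+\varepsilon\int_0^t\sigma(t,s,X_{\varepsilon,s})dB_s,
\end{equation*}
then estimate each piece in $L^p(\Omega)$ and close the resulting integral inequality by Gr\"onwall. Write $\phi(t):=E|X_{\varepsilon,t}-x_t|^p$ and apply $(a+b)^p\le 2^{p-1}(a^p+b^p)$ to split the $p$-th moment.

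For the drift term, I would use the mean value theorem together with the bound $|b'(t,s,x)|\le k_2(t,s)$ from Assumption \ref{assum1} to obtain $|b(t,s,X_{\varepsilon,s})-b(t,s,x_s)|\le k_2(t,s)|X_{\varepsilon,s}-x_s|$, and then apply H\"older with conjugate exponents $p$ and $p/(p-1)$:
\begin{equation*}
E\left|\int_0^t k_2(t,s)|X_{\varepsilon,s}-x_s|ds\right|^p\le \left(\int_0^t k_2^{p/(p-1)}(t,s)ds\right)^{p-1}\int_0^tE|X_{\varepsilon,s}-x_s|^pds.
\end{equation*}
Since $p/(p-1)\le 2\beta$ for every $p\ge 2$ and $\beta>1$, condition (\ref{udp}) makes the kernel factor uniformly bounded in $t\in[0,T]$ by interpolation (as already done for the $k_1$-kernel in the proof of Proposition \ref{pro3.1}), so the drift contribution is at most $C\int_0^t \phi(s)\,ds$.

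For the stochastic integral, the Burkholder-Davis-Gundy inequality combined with the linear growth bound $|\sigma(t,s,x)|\le k_1(t,s)(1+|x|)$ gives
\begin{equation*}
\varepsilon^p E\left|\int_0^t\sigma(t,s,X_{\varepsilon,s})dB_s\right|^p\le C\varepsilon^p\, E\left(\int_0^t k_1^2(t,s)(1+|X_{\varepsilon,s}|^2)\,ds\right)^{p/2}.
\end{equation*}
The main subtlety is to extract the prefactor $\bigl(\sup_{0\le u\le t}\int_0^u k_1^2(u,s)\,ds\bigr)^{p/2}$ rather than a crude constant (which is what the H\"older estimate used in Proposition \ref{pro3.1} would yield). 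To do this I would apply Jensen's inequality to the probability measure $k_1^2(t,s)\,ds/\int_0^t k_1^2(t,r)\,dr$ supported on $[0,t]$, which gives
\begin{equation*}
\left(\int_0^t k_1^2(t,s)(1+|X_{\varepsilon,s}|^2)\,ds\right)^{p/2}\le\left(\int_0^t k_1^2(t,s)\,ds\right)^{p/2-1}\!\int_0^t k_1^2(t,s)(1+|X_{\varepsilon,s}|^2)^{p/2}ds.
\end{equation*}
Taking expectation and using the uniform moment bound from Proposition \ref{pro3.1} to absorb $E(1+|X_{\varepsilon,s}|^2)^{p/2}\le C$ yields an upper bound of the form $C\varepsilon^p\bigl(\sup_{0\le u\le t}\int_0^u k_1^2(u,r)\,dr\bigr)^{p/2}$.

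Combining the two contributions produces
\begin{equation*}
\phi(t)\le C\varepsilon^p\Bigl(\sup_{0\le u\le t}\int_0^u k_1^2(u,s)\,ds\Bigr)^{p/2}+C\int_0^t\phi(s)\,ds,
\end{equation*}
whose inhomogeneous term is non-decreasing in $t$, so Gr\"onwall closes the estimate at the stated $\varepsilon^p$-order with the $t$-dependent sup preserved. The step I expect to be the main obstacle is precisely the Jensen rearrangement on the $k_1^2$-weighted measure: a naive H\"older inequality produces only a constant multiplicative factor and would destroy the explicit $\sup_{u\le t}\int_0^u k_1^2(u,s)\,ds$ dependence that is needed for later use (e.g.\ in the total variation estimate of Theorem \ref{theorem1}).
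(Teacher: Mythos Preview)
Your argument is correct and follows the same template as the paper: subtract the two equations, bound the drift increment via the Lipschitz estimate in $k_2$, bound the stochastic integral via BDG plus the Jensen/H\"older rearrangement against the $k_1^2$-weighted measure (this is exactly the paper's device for extracting the factor $(\int_0^t k_1^2(t,s)\,ds)^{p/2}$), and close by Gronwall. The one difference is in the drift step: the paper first applies Cauchy--Schwarz in time and then H\"older, which produces the kernel factor $\bigl(\int_0^t k_2^{2p/(p-2)}(t,s)\,ds\bigr)^{p/2-1}$ and hence forces the restriction $p\ge 2\beta/(\beta-1)$, with a final appeal to Lyapunov's inequality to cover all $p\ge 2$. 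Your direct H\"older with exponents $(p,p/(p-1))$ gives instead $\bigl(\int_0^t k_2^{p/(p-1)}(t,s)\,ds\bigr)^{p-1}$, and since $p/(p-1)\le 2<2\beta$ this is bounded for every $p\ge 2$ without any auxiliary restriction---a small but genuine simplification over the paper's route.
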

\begin{proof}
For every $\varepsilon\in (0,1),$ we have
$$X_{\varepsilon,t}-x_t=\int_0^t\left(b(t,s,X_{\varepsilon,s})-b(t,s,x_s)\right)ds+\varepsilon \int_0^t\sigma(t,s,X_{\varepsilon,s})dB_s,\mbox{  }0\le t\le T. $$
For every $p\geq \frac{2\beta}{\beta-1}> 2,$ using  the H\"older and Burkholder-Davis-Gundy inequalities and Assumption \ref{assum1}, we get
\begin{align}
E&|X_{\varepsilon,t}-x_t|^p\le 2^{p-1}\left(E\left|\int_0^t\left(b(t,s,X_{\varepsilon,s})-b(t,s,x_s)\right)ds\right|^p+\varepsilon^pE\left| \int_0^t\sigma(t,s,X_{\varepsilon,s})dB_s\right|^p\right)\notag\\
&\le CE\left(t\int_0^tk_2^2(t,s)|X_{\varepsilon,s}-x_s|^2ds\right)^{\frac{p}{2}}+C\varepsilon^pE\left(\int_0^{t}k_1^2(t,s)(1+|\Xs|^2)ds\right)^{\frac{p}{2}},\label{ct4}
\end{align}
where $C$ is a positive constant depending only on $p.$ Using (\ref{es1}), we have the following estimate for the second addend in the right hand of (\ref{ct4})
\begin{align*}
E\left(\int_0^{t}k_1^2(t,s)(1+|\Xs|^2)ds\right)^{\frac{p}{2}}&\le \left(\int_0^{t}k_1^2(t,s)ds\right)^{\frac{p}{2}-1}\left(\int_0^{t}k_1^2(t,s)E(1+|\Xs|^2)^{\frac{p}{2}}ds\right)\\
&\leq C\left(\int_0^{t}k_1^2(t,s)ds\right)^{\frac{p}{2}},\,\,0\leq t\leq T,
\end{align*}
where $C$ is a positive constant not depending on $t$ and $\varepsilon.$ For the first term in the right hand of (\ref{ct4}), we use the same arguments as in the proof of (\ref{ctB}) to obtain
\begin{align}
E\left(\int_0^tk_2(t,s)|X_{\varepsilon,s}-x_s|ds\right)^p
&\le E\left(t\int_0^tk_2^2(t,s)|X_{\varepsilon,s}-x_s|^2ds\right)^{\frac{p}{2}}\notag\\
&\le C\left(\int_0^tk_2^{\frac{2p}{p-2}}(t,s)ds\right)^{\frac{p}{2}-1}\int_0^tE|X_{\varepsilon,s}-x_s|^pds\notag\\
&\le C\int_0^tE|X_{\varepsilon,s}-x_s|^pds.\label{ct11}
\end{align}
The above estimates imply that
\begin{align*}
E|X_{\varepsilon,t}-x_t|^p\le C\left(\int_0^{t}k_1^2(t,s)ds\right)^{\frac{p}{2}}\varepsilon^p+C\int_0^tE|X_{\varepsilon,s}-x_s|^pds, \mbox{  }0\le t\le T,
\end{align*}
where $C$ is a positive constant depending only on $p,T,\beta.$
By Gronwall's lemma, we get
$$ E|X_{\varepsilon,t}-x_t|^p\le C\left( \sup_{0\leq u\leq t}\int_0^{u}k_1^2(u,s)ds\right)^{\frac{p}{2}}\varepsilon^p e^{Ct}\le C\left( \sup_{0\leq u\leq t}\int_0^{u}k_1^2(u,s)ds\right)^{\frac{p}{2}}\varepsilon^p,\mbox{  }0\le t\le T.$$
So, by Lyapunov's inequality, the estimate  (\ref{EX1}) holds true for any $p\geq 2.$ This completes the proof of the proposition.
\end{proof}
\begin{prop}\label{pro1}
Suppose Assumption \ref{assum1} and Assumption \ref{assum2}. Consider the stochastic processes $(\tilde{X}_{\varepsilon,t})_{0\le t\le T}$ and $(Y_t)_{0\le t\le T}$ defined by (\ref{hfk3}) and (\ref{hfk2}), respectively. For every $p\ge 2,$ we have
\begin{multline}\label{EXY}
E|\tilde{X}_{\varepsilon,t}-Y_t|^p\le C \left(\bigg(\sup_{0\leq u\leq t}\int_0^uk_2^2(u,s)ds\bigg)^{\frac{p}{2}} +\bigg(\sup_{0\leq u\leq t}\int_0^uk_3^2(u,s)ds\bigg)^{\frac{p}{2}}\right)\\\times\left( \sup_{0\leq u\leq t}\int_0^{u}k_1^2(u,s)ds\right)^{\frac{p}{2}}\varepsilon^{p}\, \,\forall \varepsilon\in (0,1), 0\le t\le T,
\end{multline}
where $C$ is a positive constant not depending on $t$ and $\varepsilon.$
\end{prop}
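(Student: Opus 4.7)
The starting point is to write $\Delta_t := \tilde{X}_{\varepsilon,t}-Y_t$ as an integral equation. Subtracting \eqref{hfk2} from \eqref{hfk3} and applying Taylor's formula to $b$ at $x_s$ with integral remainder (so that the second derivative is controlled by $k_3$), I obtain the decomposition
\begin{align*}
\Delta_t &= \underbrace{\int_0^t b'(t,s,x_s)\Delta_s\,ds}_{J_1(t)} + \underbrace{\frac{1}{2\varepsilon}\int_0^t b''(t,s,\xi_s)(X_{\varepsilon,s}-x_s)^2\,ds}_{J_2(t)} \\
&\quad + \underbrace{\int_0^t \bigl[\sigma(t,s,X_{\varepsilon,s})-\sigma(t,s,x_s)\bigr]\,dB_s}_{J_3(t)},
\end{align*}
where $\xi_s$ lies between $x_s$ and $X_{\varepsilon,s}$. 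The plan is to show that $E|J_2|^p$ and $E|J_3|^p$ are already bounded by the RHS of \eqref{EXY} and that $E|J_1|^p$ only contributes the Gronwall feedback $C\int_0^t E|\Delta_s|^p\,ds$.

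For $J_3$, I apply the Burkholder--Davis--Gundy inequality together with $|\sigma(t,s,X_{\varepsilon,s})-\sigma(t,s,x_s)|\le k_2(t,s)|X_{\varepsilon,s}-x_s|$, then invoke the Jensen-type inequality
$$ \left(\int_0^t k_2^2(t,s)|X_{\varepsilon,s}-x_s|^2\,ds\right)^{\!p/2} \le \left(\int_0^t k_2^2(t,s)\,ds\right)^{\!p/2-1}\int_0^t k_2^2(t,s)|X_{\varepsilon,s}-x_s|^p\,ds, $$
and finish with Proposition \ref{pro3.2}, which gives $E|X_{\varepsilon,s}-x_s|^p\le C(\sup_u\int_0^u k_1^2)^{p/2}\varepsilon^p$. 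This produces the $(\sup\int k_2^2)^{p/2}(\sup\int k_1^2)^{p/2}\varepsilon^p$ contribution. For $J_2$, I use $|b''|\le k_3$, Cauchy--Schwarz in the form
$$ \int_0^t k_3(t,s)(X_{\varepsilon,s}-x_s)^2\,ds \le \left(\int_0^t k_3^2(t,s)\,ds\right)^{\!1/2}\!\left(\int_0^t (X_{\varepsilon,s}-x_s)^4\,ds\right)^{\!1/2}, $$
the prefactor $1/(2\varepsilon)^p$, and $E|X_{\varepsilon,s}-x_s|^{2p}\le C(\sup_u\int_0^u k_1^2)^p\varepsilon^{2p}$ from Proposition \ref{pro3.2}; the resulting extra factor of $(\sup\int k_1^2)^{p/2}$, which is bounded by a constant depending only on $L,T,\alpha$, is absorbed into $C$, yielding the $(\sup\int k_3^2)^{p/2}(\sup\int k_1^2)^{p/2}\varepsilon^p$ contribution. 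For $J_1$, I mimic the argument of \eqref{ctB}--\eqref{ct11}: H\"older in the Lebesgue integral with the growth condition \eqref{udp} on $k_2$ yields $E|J_1(t)|^p\le C\int_0^t E|\Delta_s|^p\,ds$.

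Combining the three estimates gives
$$ E|\Delta_t|^p \le C\left[(\sup\int k_2^2)^{p/2}+(\sup\int k_3^2)^{p/2}\right](\sup\int k_1^2)^{p/2}\varepsilon^p + C\int_0^t E|\Delta_s|^p\,ds, $$
so Gronwall's lemma closes the estimate for $p\ge\max\{\tfrac{2\beta}{\beta-1},\tfrac{2\gamma}{\gamma-1}\}$, and Lyapunov's inequality extends \eqref{EXY} to all $p\ge 2$. The main bookkeeping obstacle is the $J_2$ term: the natural computation produces an extra factor of $(\sup\int k_1^2)^{p/2}$ compared with the clean target, and one must justify that this extra factor is controlled by an absolute constant under Assumption \ref{assum1} before absorbing it into $C$. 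Beyond that, the estimates for $J_1$ and $J_3$ are direct adaptations of the techniques already used in Propositions \ref{pro3.1}--\ref{pro3.2}.
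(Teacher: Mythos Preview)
Your proposal is correct and follows essentially the same route as the paper: the identical Taylor decomposition into $J_1,J_2,J_3$, the same BDG/H\"older estimates feeding Proposition~\ref{pro3.2} into $J_2$ and $J_3$, and the same Gronwall $+$ Lyapunov closure. The only cosmetic difference is that the paper bounds $J_2$ via the weighted Jensen inequality $(\int k_3^2|X-x|^4)^{p/2}\le(\int k_3^2)^{p/2-1}\int k_3^2|X-x|^{2p}$ rather than your Cauchy--Schwarz split, but both produce the same extra $(\sup\int k_1^2)^{p/2}$ factor that is absorbed into $C$ exactly as you note.
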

\begin{proof}
For every $\varepsilon\in (0,1),$ we have
\begin{align}
\tilde{X}_{\varepsilon,t}-Y_t=\frac{1}{\varepsilon}\int_0^t& \left(b(t,s,\Xs)-b(t,s,x_s)\right)ds -\int_0^tb'(t,s,x_s)Y_sds\notag\\
&+\int_0^t\left(\sigma(t,s,\Xs)-\sigma(t,s,x_s)\right)dB_s,\,\, 0\le t\le T.\label{ct1}
\end{align}
For each $s\in[0,T],$ using Taylor's expansion, we have
\begin{align*}
b(t,s,\Xs)-b(t,s,x_s)=b'(t,s,x_s)(\Xs-x_s)+ \frac{1}{2}b''(t,s,x_s+\xi _1(\Xs-x_s))(\Xs-x_s)^2,
\end{align*}
where $\xi_1$ is random variable lying between 0 and 1. We can rewrite (\ref{ct1}) as follows
\begin{align}
\tilde{X}_{\varepsilon,t}-Y_t= \int_0^t b'(t,s,x_s)&\big(\tilde{X}_{\varepsilon,s}-Y_s\big)ds+\frac{1}{2\varepsilon}\int_0^t b''(t,s,x_s+\xi _1(\Xs-x_s))(\Xs-x_s)^2ds\notag\\
&+\int_0^t\left(\sigma(t,s,\Xs)-\sigma(t,s,x_s)\right)dB_s,\,\, 0\le t\le T. \label{ct3}
\end{align}
Hence, for every $p\ge 2,$ we get
\begin{align*}
E|\tilde{X}_{\varepsilon,t}-Y_t|^p&\le 3^{p-1}\bigg(E\left|\int_0^t b'(t,s,x_s)\big(\tilde{X}_{\varepsilon,s}-Y_s\big)ds\right|^p\\
&+\frac{1}{2^{p}\varepsilon^p}E\left|\int_0^t b''(t,s,x_s+\xi _1(\Xs-x_s))(\Xs-x_s)^2ds\right|^p\\
&+E\left|\int_0^t\left(\sigma(t,s,\Xs)-\sigma(t,s,x_s)\right)dB_s\right|^p\bigg),\,\, 0\le t\le T.
\end{align*}
Then, by using the H\"{o}lder and Burkholder-Davis-Gundy inequalities we deduce
\begin{align*}
E|\tilde{X}_{\varepsilon,t}-Y_t|^p&\le 3^{p-1}\Bigg(E\left(t\int_0^tk_2^2(t,s)|\tilde{X}_{\varepsilon,s}-Y_s|^2ds\right)^{\frac{p}{2}}+\frac{1}{2^{p}\varepsilon^p}E\left(t\int_0^tk_3^2(t,s)\left|\Xs-x_s\right|^4ds\right)^{\frac{p}{2}}\\
&+C\left(E\int_0^tk_2^2(t,s)|\Xs-x_s|^{2}ds\right)^{\frac{p}{2}}\Bigg),\\
&\le CE\left(\int_0^tk_2^2(t,s)|\tilde{X}_{\varepsilon,s}-Y_s|^2ds\right)^{\frac{p}{2}}+\frac{C}{\varepsilon^p}E\left(\int_0^tk_3^2(t,s)\left|\Xs-x_s\right|^4ds\right)^{\frac{p}{2}}\\
&+CE\left(\int_0^tk_2^2(t,s)|\Xs-x_s|^{2}ds\right)^{\frac{p}{2}},\,\, 0\le t\le T,
\end{align*}
where $C$ is a positive constant depending only on $p$ and $T.$ We now consider $p\ge \frac{2\beta}{\beta-1}>2.$ By using the H\"{o}lder inequality and the estimate (\ref{EX1}) we get
\begin{align*}
E\left(\int_0^tk_2^2(t,s)|\tilde{X}_{\varepsilon,s}-Y_s|^2ds\right)^{\frac{p}{2}}&\leq \left(\int_0^tk_2^\frac{2p}{p-2}(t,s)ds\right)^{\frac{p}{2}-1}\int_0^t E|\tilde{X}_{\varepsilon,s}-Y_s|^pds\\
&\leq C\int_0^t E|\tilde{X}_{\varepsilon,s}-Y_s|^pds,
\end{align*}
\begin{align*}
E\left(\int_0^tk_3^2(t,s)\left|\Xs-x_s\right|^4ds\right)^{\frac{p}{2}}&\leq \left(\int_0^tk_3^2(t,s)ds\right)^{\frac{p}{2}-1}\int_0^tk_3^2(t,s)\left|\Xs-x_s\right|^{2p}ds\\
&\leq \left(\int_0^tk_3^2(t,s)ds\right)^{\frac{p}{2}}\left( \sup_{0\leq u\leq t}\int_0^{u}k_1^2(u,s)ds\right)^{p}\varepsilon^{2p},
\end{align*}
and
\begin{align*}
E\left(\int_0^tk_2^2(t,s)\left|\Xs-x_s\right|^2ds\right)^{\frac{p}{2}}&\leq \left(\int_0^tk_2^2(t,s)ds\right)^{\frac{p}{2}-1}\int_0^tk_2^2(t,s)\left|\Xs-x_s\right|^{p}ds\\
&\leq \left(\int_0^tk_2^2(t,s)ds\right)^{\frac{p}{2}}\left( \sup_{0\leq u\leq t}\int_0^{u}k_1^2(u,s)ds\right)^{\frac{p}{2}}\varepsilon^{p}.
\end{align*}
So it holds that
\begin{align*}
E|&\tilde{X}_{\varepsilon,t}-Y_t|^p\leq C\int_0^t E|\tilde{X}_{\varepsilon,s}-Y_s|^pds\\
&+ C \left(\bigg(\sup_{0\leq u\leq t}\int_0^uk_2^2(u,s)ds\bigg)^{\frac{p}{2}} +\bigg(\sup_{0\leq u\leq t}\int_0^uk_3^2(u,s)ds\bigg)^{\frac{p}{2}}\right)\left( \sup_{0\leq u\leq t}\int_0^{u}k_1^2(u,s)ds\right)^{\frac{p}{2}}\varepsilon^{p},
\end{align*}
which, by Gronwall's lemma, gives us the desired conclusion  (\ref{EXY}). This completes the proof of the proposition.
\end{proof}

\begin{prop}\label{pro2}
Suppose Assumptions \ref{assum1} and \ref{assum2}. Consider the stochastic processes $(\tilde{X}_{\varepsilon,t})_{0\le t\le T}$ and $(Y_t)_{0\le t\le T}$ defined by (\ref{hfk3}) and (\ref{hfk2}). Then we have
\begin{multline}\label{DXY}
E\|D\tilde{X}_{\varepsilon,t}-DY_t\|^2_{L^2[0,T]}\le C \left(\bigg(\sup_{0\leq u\leq t}\int_0^uk_2^2(u,s)ds\bigg)+\bigg(\sup_{0\leq u\leq t}\int_0^uk_3^2(u,s)ds\bigg)\right)\\\times\left( \sup_{0\leq u\leq t}\int_0^{u}k_1^2(u,s)ds\right)\varepsilon^{2}\, \,\forall \varepsilon\in (0,1), 0\le t\le T,
\end{multline}
where $C$ is a positive constant not depending on $t$ and $\varepsilon.$
\end{prop}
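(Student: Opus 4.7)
My plan is to derive a linear Volterra inequality for $g(t):=E\|D\tilde X_{\varepsilon,t}-DY_t\|^2_{L^2[0,t]}$ and then close it by the same power-$p/2$/Gronwall trick used in (\ref{ct10})--(\ref{ser}). Differentiating (\ref{hfk2}) in the Malliavin sense gives
$$D_\theta Y_t=\sigma(t,\theta,x_\theta)+\int_\theta^t b'(t,s,x_s)D_\theta Y_s\,ds,$$
while Proposition \ref{pro3.3} divided by $\varepsilon$ (using $D_\theta\tilde X_{\varepsilon,t}=\varepsilon^{-1}D_\theta X_{\varepsilon,t}$, since $x_t$ is deterministic) gives the corresponding equation for $D_\theta\tilde X_{\varepsilon,t}$. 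Subtracting, splitting
$$b'(t,s,X_{\varepsilon,s})D_\theta\tilde X_{\varepsilon,s}-b'(t,s,x_s)D_\theta Y_s=b'(t,s,x_s)(D_\theta\tilde X_{\varepsilon,s}-D_\theta Y_s)+(b'(t,s,X_{\varepsilon,s})-b'(t,s,x_s))D_\theta\tilde X_{\varepsilon,s},$$
and applying Taylor's expansion $b'(t,s,X_{\varepsilon,s})-b'(t,s,x_s)=\varepsilon\, b''(t,s,\xi_s)\tilde X_{\varepsilon,s}$, I would obtain a decomposition $D_\theta\tilde X_{\varepsilon,t}-D_\theta Y_t=A_1+A_2+A_3+A_4$, where $A_1=\sigma(t,\theta,X_{\varepsilon,\theta})-\sigma(t,\theta,x_\theta)$, $A_2=\int_\theta^t b'(t,s,x_s)(D_\theta\tilde X_{\varepsilon,s}-D_\theta Y_s)\,ds$, $A_3=\varepsilon\int_\theta^t b''(t,s,\xi_s)\tilde X_{\varepsilon,s}D_\theta\tilde X_{\varepsilon,s}\,ds$, and $A_4=\varepsilon\int_\theta^t\sigma'(t,s,X_{\varepsilon,s})D_\theta\tilde X_{\varepsilon,s}\,dB_s$.

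Next I would estimate $\int_0^t E|A_j|^2\,d\theta$ for each $j$. For $A_1$, Assumption \ref{assum1} and Proposition \ref{pro3.2} give a contribution of order $\varepsilon^2\big(\sup_{u\le t}\int_0^u k_2^2\big)\big(\sup_{u\le t}\int_0^u k_1^2\big)$. For $A_4$, the It\^o isometry together with Proposition \ref{pro3.4} (applied to $X_{\varepsilon,\cdot}$, the two $\varepsilon$-factors cancelling against $D\tilde X=\varepsilon^{-1}DX$) gives a bound of the same order. For $A_2$, Cauchy--Schwarz in time and Fubini yield the Volterra contribution $\int_0^t E|A_2|^2\,d\theta\le C\int_0^t k_2^2(t,s)g(s)\,ds$. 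The delicate term is $A_3$: Cauchy--Schwarz in time leaves $\int_0^t E|A_3|^2\,d\theta\le C\varepsilon^2\int_0^t k_3^2(t,s)\,E[\tilde X_{\varepsilon,s}^2\|D\tilde X_{\varepsilon,s}\|^2_{L^2[0,s]}]\,ds$, and a second Cauchy--Schwarz in probability decouples the expectation into $\sqrt{E|\tilde X_{\varepsilon,s}|^4}\cdot\sqrt{E\|D\tilde X_{\varepsilon,s}\|^4_{L^2[0,s]}}$. Proposition \ref{pro3.2} at $p=4$ bounds the first factor by $C\sup_u\int_0^u k_1^2$, and the fourth-moment analogue of Proposition \ref{pro3.4}, obtained by re-running the argument (\ref{ct10})--(\ref{ser}) at $p=4$, bounds the second factor by the same quantity. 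The resulting extra factor of $\sup_u\int_0^u k_1^2$ is absorbed into $C$, since that supremum is itself bounded by a constant depending only on $L,T,\alpha$ by Assumption \ref{assum1} and H\"older's inequality.

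Combining the four estimates produces a linear Volterra inequality of the form
$$g(t)\le C\varepsilon^2\Big[\sup_{u\le t}\int_0^u k_2^2(u,s)\,ds+\sup_{u\le t}\int_0^u k_3^2(u,s)\,ds\Big]\sup_{u\le t}\int_0^u k_1^2(u,s)\,ds+C\int_0^t k_2^2(t,s)g(s)\,ds.$$
To close it I would mimic (\ref{ct10})--(\ref{ser}): raise both sides to the power $p/2$ for some $p\ge 2\beta/(\beta-1)$, apply H\"older to the integral using (\ref{udp}) so that $\int_0^t k_2^2(t,s)g(s)\,ds$ is dominated by a constant multiple of $\int_0^t g(s)^{p/2}\,ds$, and conclude by the standard Gronwall lemma; raising the resulting bound on $g(t)^{p/2}$ to power $2/p$ delivers (\ref{DXY}). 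The main obstacle is the term $A_3$, which couples the fluctuation $\tilde X_{\varepsilon,s}$ with its own Malliavin derivative; its clean treatment requires the fourth-moment version of Proposition \ref{pro3.4} sketched above, which is not stated as a separate result but is obtainable by exactly the same Picard-approximation argument already developed in the paper.
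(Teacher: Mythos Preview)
Your approach is correct, but the paper chooses a different splitting that avoids the main difficulty you flag. Instead of writing
\[
b'(t,s,X_{\varepsilon,s})D_\theta\tilde X_{\varepsilon,s}-b'(t,s,x_s)D_\theta Y_s
= b'(t,s,x_s)(D_\theta\tilde X_{\varepsilon,s}-D_\theta Y_s)+\bigl(b'(t,s,X_{\varepsilon,s})-b'(t,s,x_s)\bigr)D_\theta\tilde X_{\varepsilon,s},
\]
the paper swaps the roles of $D_\theta\tilde X_{\varepsilon,s}$ and $D_\theta Y_s$ in the two pieces, obtaining
\[
\bigl(b'(t,s,X_{\varepsilon,s})-b'(t,s,x_s)\bigr)D_\theta Y_s
+ b'(t,s,X_{\varepsilon,s})(D_\theta\tilde X_{\varepsilon,s}-D_\theta Y_s).
\]
Since $D_\theta Y_s$ is \emph{deterministic} (it solves a non-random Volterra equation), the cross-term decouples immediately: one only needs the second-moment bound $E|X_{\varepsilon,s}-x_s|^2\le C\varepsilon^2$ from Proposition~\ref{pro3.2}, and no coupling of $\tilde X_{\varepsilon,s}$ with its own Malliavin derivative ever appears. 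This bypasses entirely the ``delicate term $A_3$'' you identify, so the paper needs nothing beyond Proposition~\ref{pro3.4} at $p=2$. Your route does go through, but note that the displays (\ref{ct10})--(\ref{ser}) control $\bigl(E\|DX^{(n)}_{\varepsilon,t}\|^2_{L^2}\bigr)^{p/2}$, not $E\|DX^{(n)}_{\varepsilon,t}\|^p_{L^2}$; to get the genuine fourth-moment bound $E\|DX_{\varepsilon,s}\|^4_{L^2}\le C\varepsilon^4$ you need, one must redo the Picard argument using BDG in place of It\^o isometry from the outset, which is standard but is a bit more than ``re-running (\ref{ct10})--(\ref{ser}) at $p=4$'' as written.
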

\begin{proof}For each $t\in[0,T],$ the random variable $Y_t$ is Malliavin differentiable and its derivative is given by $D_{\theta}Y_t=0$ for $\theta>t$ and
\begin{align}\label{DY}
 D_{\theta}Y_t=\sigma(t,\theta, x_\theta)+\int_{\theta}^{t}b'(t,s,x_s)D_{\theta}Y_s ds,\mbox{  }0\le \theta\le t\le T.
\end{align}
By the definition of $\tilde{X}_{\varepsilon,t}$, we have $D_{\theta}\tilde{X}_{\varepsilon,t}=\frac{1}{\varepsilon}D_{\theta}{X}_{\varepsilon,t}.$ Hence, from (\ref{DX}) and (\ref{DY}), we have
\begin{align*}
&D_{\theta}\tilde{X}_{\varepsilon,t}-D_{\theta}Y_t=\sigma(t,\theta,X_{\varepsilon,\theta})-\sigma(t,\theta, x_\theta)\\
&+\int_{\theta}^tb'(t,s,X_{\varepsilon,s})D_{\theta}\tilde{X}_{\varepsilon,s}ds-\int_{\theta}^{t}b'(t,s,x_s)D_{\theta}Y_s ds+\int_{\theta}^t\sigma'(t,s,X_{\varepsilon,s})D_{\theta}\Xs dB_s,\,\,0\le \theta\leq t\le T.
\end{align*}
Consequently, we deduce
\begin{align}
&E|D_{\theta}\tilde{X}_{\varepsilon,t}-D_{\theta}Y_t|^2\le 3E\left|\sigma(t,\theta,X_{\varepsilon,\theta})-\sigma(t,\theta, x_\theta)\right|^2\notag\\
&+3E\left|\int_{\theta}^t(b'(t,s,X_{\varepsilon,s})D_{\theta}\tilde{X}_{\varepsilon,s}-b'(t,s,x_s)D_{\theta}Y_s ) ds\right|^2+3E\left|\int_{\theta}^t\sigma'(t,s,X_{\varepsilon,s})D_{\theta}\Xs dB_s\right|^2\label{ct2}.
\end{align}
For the first term in the right hand side of (\ref{ct2}), we use the estimate (\ref{EX1}) to get
\begin{align*}
3E\left|\sigma(t,\theta,X_{\varepsilon,\theta})-\sigma(t,\theta, x_\theta)\right|^2&\le 3k_2^2(t,\theta)E|X_{\varepsilon,\theta}-x_\theta|^2\\
&\le Ck_2^2(t,\theta)\left( \sup_{0\leq u\leq t}\int_0^{u}k_1^2(u,s)\right)\varepsilon^2,\,\,0\le \theta\leq t\le T,
\end{align*}
where $C$ is a positive constant not depending on $t$ and $\varepsilon.$ For the second term, we use the Cauchy-Schwart inequality and Assumptions \ref{assum1} and \ref{assum2} to get
\begin{align*}
&3E\left|\int_{\theta}^t(b'(t,s,X_{\varepsilon,s})D_{\theta}\tilde{X}_{\varepsilon,s}-b'(t,s,x_s)D_{\theta}Y_s) ds\right|^2\\
&\le 6E\left|\int_{\theta}^t\left(b'(t,s,X_{\varepsilon,s})-b'(t,s,x_s)\right)D_{\theta}Y_sds\right|^2
+6E\left|\int_{\theta}^tb'(t,s,X_{\varepsilon,s})(D_{\theta}\tilde{X}_{\varepsilon,s}-D_{\theta}Y_s)ds\right|^2\\
&\le 6t\int_{\theta}^tk_3^2(t,s)E\left|X_{\varepsilon,s}-x_s\right|^2ds
+6t\int_{\theta}^tk_2^2(t,s)E|D_{\theta}\tilde{X}_{\varepsilon,s}-D_{\theta}Y_s|^2ds\\
&\le C\left( \sup_{0\leq u\leq t}\int_0^{u}k_1^2(u,s)ds\right)\varepsilon^2\int_{\theta}^tk_3^2(t,s)ds
+C\int_{\theta}^tk_2^2(t,s)E|D_{\theta}\tilde{X}_{\varepsilon,s}-D_{\theta}Y_s|^2ds,
\end{align*}
Furthermore, by the Itô isometry and the estimates (\ref{DX1}), we can estimates the last term in the right hand side of (\ref{ct2}) as follows
\begin{align*}
3E\left|\int_{\theta}^t\sigma'(t,s,X_{\varepsilon,s})D_{\theta}\Xs dB_s\right|^2&=3\int_{\theta}^tE\left|\sigma'(t,s,X_{\varepsilon,s})D_{\theta}\Xs\right|^2ds\\
&\le C\int_{\theta}^tk_2^2(t,s)E|D_{\theta}\Xs|^2ds.
\end{align*}
Combining the above estimates, we obtain
\begin{align*}
E|D_{\theta}&\tilde{X}_{\varepsilon,t}-D_{\theta}Y_t|^2
\le C\left( \sup_{0\leq u\leq t}\int_0^{u}k_1^2(u,s)ds\right)\varepsilon^2\left(k_2^2(t,\theta)+\int_{\theta}^tk_3^2(t,s)ds\right)\\
&+ C\int_{\theta}^tk_2^2(t,s)E|D_{\theta}\tilde{X}_{\varepsilon,s}-D_{\theta}Y_s|^2ds+C\int_{\theta}^tk_2^2(t,s)E|D_{\theta}\Xs|^2ds, \mbox{  }0\le \theta\le t\le T,
\end{align*}
and hence,
\begin{align*}
&E\|D\tilde{X}_{\varepsilon,t}-DY_t\|^2_{L^2[0,T]}=\int_0^tE|D_{\theta}\tilde{X}_{\varepsilon,t}-D_{\theta}Y_t|^2d\theta\\
&\leq C \Bigg(\bigg(\sup_{0\leq u\leq t}\int_0^uk_2^2(u,s)ds\bigg) +\bigg(\sup_{0\leq u\leq t}\int_0^uk_3^2(u,s)ds\bigg)\Bigg)\left( \sup_{0\leq u\leq t}\int_0^{u}k_1^2(u,s)ds\right)\varepsilon^2\\
&+ C\int_0^tk_2^2(t,s)\int_0^sE|D_{\theta}\tilde{X}_{\varepsilon,s}-D_{\theta}Y_s|^2d\theta ds+C\int_0^tk_2^2(t,s)\int_0^sE|D_{\theta}\Xs|^2d\theta ds.
\end{align*}
Since the estimate (\ref{DX1}) we obtain
\begin{align*}
&E\|D\tilde{X}_{\varepsilon,t}-DY_t\|^2_{L^2[0,T]}\leq C\int_0^tk_2^2(t,s)E\|D\tilde{X}_{\varepsilon,s}-DY_s\|^2_{L^2[0,T]} ds\\
&+C \Bigg(\bigg(\sup_{0\leq u\leq t}\int_0^uk_2^2(u,s)ds\bigg)+\bigg(\sup_{0\leq u\leq t}\int_0^uk_3^2(u,s)ds\bigg)\Bigg)\left( \sup_{0\leq u\leq t}\int_0^{u}k_1^2(u,s)ds\right)\varepsilon^2
\end{align*}
We now consider $p\ge \frac{2\beta}{\beta-1}>2.$
\begin{align*}
&\big(E\|D\tilde{X}_{\varepsilon,t}-DY_t\|^2_{L^2[0,T]}\big)^{\frac{p}{2}}\le C\left(\int_0^tk_2^{\frac{2p}{p-2}}(t,s)ds\right)^{\frac{p}{2}-1}\int_0^t\big(E\|D\tilde{X}_{\varepsilon,s}-DY_s\|^2_{L^2[0,T]}\big)^{\frac{p}{2}}ds\\
&+C \Bigg(\bigg(\sup_{0\leq u\leq t}\int_0^uk_2^2(u,s)ds\bigg)+\bigg(\sup_{0\leq u\leq t}\int_0^uk_3^2(u,s)ds\bigg)\Bigg)^{\frac{p}{2}}\left( \sup_{0\leq u\leq t}\int_0^{u}k_1^2(u,s)ds\right)^{\frac{p}{2}}\varepsilon^{p}\\
&\le C \int_0^t\big(E\|D\tilde{X}_{\varepsilon,s}-DY_s\|^2_{L^2[0,T]}\big)^{\frac{p}{2}}ds\\
&+C \Bigg(\bigg(\sup_{0\leq u\leq t}\int_0^uk_2^2(u,s)ds\bigg)+\bigg(\sup_{0\leq u\leq t}\int_0^uk_3^2(u,s)ds\bigg)\Bigg)^{\frac{p}{2}}\left( \sup_{0\leq u\leq t}\int_0^{u}k_1^2(u,s)ds\right)^{\frac{p}{2}}\varepsilon^{p}.
\end{align*}
where $C$ is a positive constant not depending on $t$ and $\varepsilon.$ An application of Gronwall's lemma give us
\begin{align*}
\big(E\|D\tilde{X}_{\varepsilon,t}-DY_t\|^2_{L^2[0,T]}\big)^{\frac{p}{2}}\le &C \Bigg(\bigg(\sup_{0\leq u\leq t}\int_0^uk_2^2(u,s)ds\bigg)+\bigg(\sup_{0\leq u\leq t}\int_0^uk_3^2(u,s)ds\bigg)\Bigg)^{\frac{p}{2}}\\
&\times \left( \sup_{0\leq u\leq t}\int_0^{u}k_1^2(u,s)ds\right)^{\frac{p}{2}}\varepsilon^{p}.
\end{align*}
This implies (\ref{DXY}). The proof of the proposition is completed.
\end{proof}

\begin{proof}[The proof of Theorem \ref{theorem1}]
Fix $\varepsilon\in (0,1)$ and $t\in (0,T].$ We consider the random variables $F_1=Y_t$ and $F_2=\tilde{X}_{\varepsilon,t}.$ Thanks to Proposition \ref{pro1} and Proposition \ref{pro2} we have
\begin{align*}
\|F_1-F_2\|_{1,2}&=\|\tilde{X}_{\varepsilon,t}-Y_t\|_{1,2}\\
&=\left(E|\tilde{X}_{\varepsilon,t}-Y_t|^2+E\|D\tilde{X}_{\varepsilon,t}-DY_t\|^2_{L^2[0,T]}\right)^{\frac{1}{2}}\\
&\le C\Bigg(\bigg(\sup_{0\leq u\leq t}\int_0^uk_2^2(u,s)ds\bigg)+\bigg(\sup_{0\leq u\leq t}\int_0^uk_3^2(u,s)ds\bigg)\Bigg)^{\frac{1}{2}}\left( \sup_{0\leq u\leq t}\int_0^{u}k_1^2(u,s)ds\right)^{\frac{1}{2}}\varepsilon.
\end{align*}
We observe from (\ref{DY}) that $D_{\theta}Y_t$ are deterministic for all $0\le\theta\le t\le T.$ So $D_rD_\theta Y_t=0,\mbox{  }0\le r,\theta\le t\le T.$ Moreover, by the Clark-Ocone formula,
$$Y_t=EY_t+\int_0^t E[D_{\theta}Y_t|\mathcal{F}_\theta]dB_\theta=\int_0^t D_{\theta}Y_tdB_\theta,\,\,0\leq t\leq T,$$
and hence,
$$ \|DF_1\|_{L^2[0,T]}^2=\|DY_t\|_{L^2[0,T]}^2={\rm Var}(Y_t). $$
For any measurable function $\phi $ bounded by $1,$ we apply Lemma \ref{dltq} to get
\begin{align*}
&|E\phi(\tilde{X}_{\varepsilon,t}) -E\phi(Y_t)|  \\
&\le C\left( E\|DY_t\|^{-8}_{L^2[0,T]} E\bigg( \int_0^t\int_0^t |D_{\theta}D_rY_t|^2d\theta dr\bigg)^2+ \left(E\|DY_t\|^{-2}_{L^2[0,T]}\right)^2\right)^{\frac{1}{4}}\|\tilde{X}_{\varepsilon,t}-Y_t\|_{1,2}\\
&\le \Bigg(\bigg(\sup_{0\leq u\leq t}\int_0^uk_2^2(u,s)ds\bigg)+\bigg(\sup_{0\leq u\leq t}\int_0^uk_3^2(u,s)ds\bigg)\Bigg)^{\frac{1}{2}}\left( \sup_{0\leq u\leq t}\int_0^{u}k_1^2(u,s)ds\right)^{\frac{1}{2}}\frac{C\varepsilon}{\sqrt{{\rm Var}(Y_t)}}.
\end{align*}
Taking the supremum over all $\phi$ yields
\begin{align*}
&d_{TV}(\tilde{X}_{\varepsilon,t},Y_t)\\
&\le  \Bigg(\bigg(\sup_{0\leq u\leq t}\int_0^uk_2^2(u,s)ds\bigg)+\bigg(\sup_{0\leq u\leq t}\int_0^uk_3^2(u,s)ds\bigg)\Bigg)^{\frac{1}{2}}\left( \sup_{0\leq u\leq t}\int_0^{u}k_1^2(u,s)ds\right)^{\frac{1}{2}}\frac{C\varepsilon}{\sqrt{{\rm Var}(Y_t)}},
\end{align*}
where $C$ is a positive constant not depending on $t$ and $\varepsilon.$ Note that, in the above computations, we have implicitly assumed that ${\rm Var}(Y_t)\neq 0,$ as otherwise the conclusion is obvious.

The proof of Theorem \ref{theorem1} is complete.
\end{proof}
\subsection{Proof of Theorem \ref{theorem2}}


We will carry out the proof in two steps.

\noindent\textbf{Step 1.} In this step, we show that for all $p\ge 2$ then
\begin{align}\label{ct6}
\lim\limits_{\varepsilon\to 0}E\left|\frac{\tilde{X}_{\varepsilon,t}-Y_t}{\varepsilon}-\frac{1}{2}Z_t\right|^{p}=0,\mbox{  }0\le t\le T.
\end{align}
For each $t\in[0,T],$ similar to (\ref{ct3}), we have
\begin{align*}
\tilde{X}_{\varepsilon,t}-Y_t&= \int_0^t b'(t,s,x_s)\big(\tilde{X}_{\varepsilon,s}-Y_s\big)ds+\frac{1}{2\varepsilon}\int_0^t b''(t,s,x_s+\xi _1(\Xs-x_s))(\Xs-x_s)^2ds\\
&+\int_0^t\sigma'(t,s,x_s)\left(\Xs-x_s\right)dB_s+\frac{1}{2}\int_0^t \sigma''(t,s,x_s+\xi _2(\Xs-x_s))(\Xs-x_s)^2dB_s,
\end{align*}
where $\xi_1,\xi_2$ are random variables lying between $0$ and $1.$ Recalling the definition (\ref{hfk4}) of $Z_t,$ we deduce
 \begin{align*}
&\frac{\tilde{X}_{\varepsilon,t}-Y_t}{\varepsilon}-\frac{1}{2}Z_t= \int_0^t b'(t,s,x_s)\bigg(\frac{\tilde{X}_{\varepsilon,s}-Y_s}{\varepsilon}-\frac{1}{2}Z_s\bigg)ds\\
&+ \frac{1}{2}\int_0^t b''\left(t,s,x_s+\xi _1(\Xs-x_s)\right)\big(\tilde{X}_{\varepsilon,s}^2-Y_s^2\big)ds\\
&+\frac{1}{2} \int_0^t \left(b''(t,s,x_s+\xi _1(\Xs-x_s))-b''(t,s,x_s)\right)Y_s^2ds\\
&+\int_0^t\sigma'(t,s,x_s)\big(\tilde{X}_{\varepsilon,s}-Y_s\big)dB_s+\frac{1}{2\varepsilon}\int_0^t \sigma''(t,s,x_s+\xi _2(\Xs-x_s))(\Xs-x_s)^2dB_s,
\end{align*}
and hence,
\begin{align*}
\left|\frac{\tilde{X}_{\varepsilon,t}-Y_t}{\varepsilon}-\frac{1}{2}Z_t\right|^p\le 5^{p-1}
&\Bigg( \left|\int_0^t b'(t,s,x_s)\bigg(\frac{\tilde{X}_{\varepsilon,s}-Y_s}{\varepsilon}-\frac{1}{2}Z_s\bigg)ds\right|^p\\
&+\left|\frac{1}{2}\int_0^t b''\left(t,s,x_s+\xi _1(\Xs-x_s)\right)\big(\tilde{X}_{\varepsilon,s}^2-Y_s^2\big)ds\right|^p\\
&+\left|\frac{1}{2} \int_0^t \left(b''(t,s,x_s+\xi _1(\Xs-x_s))-b''(t,s,x_s)\right)Y_s^2ds\right|^{p}\\
&+\left|\int_0^t\sigma'(t,s,x_s)\big(\tilde{X}_{\varepsilon,s}-Y_s\big)dB_s\right|^p\\
&+\left|\frac{1}{2\varepsilon}\int_0^t \sigma''(t,s,x_s+\xi _2(\Xs-x_s))(\Xs-x_s)^2dB_s\right|^p
\Bigg).
\end{align*}
By using Assumptions \ref{assum1}-\ref{assum2} and the H\"{o}lder and Burkholder-Davis-Gundy inequalities, we get
\begin{align*}
E\left|\frac{\tilde{X}_{\varepsilon,t}-Y_t}{\varepsilon}-\frac{1}{2}Z_t\right|^p
&\le 5^{p-1}\left(E\left(t\int_0^t k_2^2(t,s)\left|\frac{\tilde{X}_{\varepsilon,s}-Y_s}{\varepsilon}-\frac{1}{2}Z_s\right|^2ds\right)^{\frac{p}{2}}\right.\\
&+\frac{1}{2^p}E\left(t\int_0^t k_3^2(t,s)|\tilde{X}_{\varepsilon,s}^2-Y_s^2|^2ds\right)^{\frac{p}{2}}\\
&+\frac{1}{2^p}E\left(t\int_0^t \left|b''(t,s,x_s+\xi _1(\Xs-x_s))-b''(t,s,x_s)\right|^{2}Y_s^4ds\right)^{\frac{p}{2}}\\
&+E\left(\int_0^t k_2^2(t,s)|\tilde{X}_{\varepsilon,s}-Y_s|^2ds\right)^{\frac{p}{2}}\\
&+\left.\frac{1}{2^p\varepsilon^p}E\left(\int_0^t k_3^2(t,s)\left|\Xs-x_s\right|^4ds\right)^{\frac{p}{2}}\right),\mbox{  }0\le t\le T.
\end{align*}
We put
\begin{multline*}
H_{1,\varepsilon}(t):=\frac{1}{2^p}E\left(t\int_0^t k_3^2(t,s)|\tilde{X}_{\varepsilon,s}^2-Y_s^2|^2ds\right)^{\frac{p}{2}}+E\left(\int_0^t k_2^2(t,s)|\tilde{X}_{\varepsilon,s}-Y_s|^2ds\right)^{\frac{p}{2}}\\+\frac{1}{2^p\varepsilon^p}E\left(\int_0^t k_3^2(t,s)\left|\Xs-x_s\right|^4ds\right)^{\frac{p}{2}},
\end{multline*}
and $H_{2,\varepsilon}(t):=\frac{1}{2^p}E\left(t\int_0^t \left|b''(t,s,x_s+\xi _1(\Xs-x_s))-b''(t,s,x_s)\right|^{2}Y_s^4ds\right)^{\frac{p}{2}}.$ As a consequence, for every $p\geq \max\left\{\frac{2\beta}{\beta-1},\frac{2\gamma}{\gamma-1}\right\}>2,$ we obtain
\begin{align*}
E\left|\frac{\tilde{X}_{\varepsilon,t}-Y_t}{\varepsilon}-\frac{1}{2}Z_t\right|^p&\le C\left(\int_0^t k_2^{\frac{2p}{p-2}}(t,s)ds\right)^{\frac{p}{2}-1}\int_0^t E\left|\frac{\tilde{X}_{\varepsilon,s}-Y_s}{\varepsilon}-\frac{1}{2}Z_s\right|^{p}ds+H_{1,\varepsilon}(t)+H_{2,\varepsilon}(t)\\
&\leq C\int_0^t E\left|\frac{\tilde{X}_{\varepsilon,s}-Y_s}{\varepsilon}-\frac{1}{2}Z_s\right|^{p}ds+H_{1,\varepsilon}(t)+H_{2,\varepsilon}(t),
\end{align*}
where $C$ is a positive constant depending only on $p,T$ and $L.$ Then, an application of Gronwall's lemma gives us
\begin{equation}\label{domi} E\left|\frac{\tilde{X}_{\varepsilon,t}-Y_t}{\varepsilon}-\frac{1}{2}Z_t\right|^p\leq H_{1,\varepsilon}(t)+H_{2,\varepsilon}(t)+\int_0^t (H_{1,\varepsilon}(s)+H_{2,\varepsilon}(s))e^{C(t-s)}ds,\mbox{  }0\le t\le T.
\end{equation}
Furthermore, we have
\begin{multline*}
H_{1,\varepsilon}(t)\leq\frac{t^{\frac{p}{2}}}{2^p}\left(\int_0^t k_3^{\frac{2p}{p-2}}(t,s)ds\right)^{\frac{p}{2}-1}\int_0^t E|\tilde{X}_{\varepsilon,s}^2-Y_s^2|^pds+\left(\int_0^t k_2^{\frac{2p}{p-2}}(t,s)ds\right)^{\frac{p}{2}-1}\int_0^tE|\tilde{X}_{\varepsilon,s}-Y_s|^pds\\+\frac{1}{2^p\varepsilon^p}\left(\int_0^t k_3^{\frac{2p}{p-2}}(t,s)ds\right)^{\frac{p}{2}-1}\int_0^t E\left|\Xs-x_s\right|^{2p}ds,\mbox{  }0\le t\le T,
\end{multline*}
which, combined with (\ref{EX1}) and (\ref{EXY}), implies that $H_{1,\varepsilon}(t)\to 0$ as $\varepsilon\to 0.$ Note that $H_{2,\varepsilon}(t)$ is bounded uniformly in $\varepsilon\in (0,1).$ Indeed,
$$H_{2,\varepsilon}(t)\leq \frac{t^{\frac{p}{2}}}{2^p}E\left(\int_0^t 4k_3^2(t,s)Y_s^4ds\right)^{\frac{p}{2}}\leq t^{\frac{p}{2}}\left(\int_0^t k_3^{\frac{2p}{p-2}}(t,s)ds\right)^{\frac{p}{2}-1}\int_0^tE|Y_s|^{2p}ds<\infty.$$
So, by the dominated convergence theorem, we also have $H_{2,\varepsilon}(t)\to 0$ as $\varepsilon\to 0.$

Letting $\varepsilon\to 0,$ we obtain (\ref{ct6}) from (\ref{domi}). This finishes the proof of Step 1.

\noindent\textbf{Step 2.} In this step, we prove (\ref{ct}). For simplicity, we write $\langle.,. \rangle$ instead of $\langle.,. \rangle_{L^2[0,T]}$ and $\|.\|$ instead of $\|.\|_{L^2[0,T]}.$ Fixed $t\in (0,T].$ In view of the relation (3.2) in \cite{Dung2022}, we have
\begin{align*}
E[\varphi(\tilde{X}_{\varepsilon,t})]-E[\varphi(Y_t)]&=E\left[\int_{Y_t}^{\tilde{X}_{\varepsilon,t}} \varphi(z)dz\delta\bigg(\frac{DY_t}{\|DY_t\|^2}\bigg)\right]-E\left[\frac{\varphi(\tilde{X}_{\varepsilon,t}) \langle D\tilde{X}_{\varepsilon,t}-DY_t, DY_t \rangle}{\|DY_t\|^2}\right].
\end{align*}
We observe from the equation (\ref{DY}) that $D_{\theta}Y_t$ is deterministic for all $0\le \theta\le t\le T,$ and $E[D_{\theta}Y_t]=0.$ Hence, by Clark-Ocone formulla, we have
$$ Y_t=\int_0^tE[D_\theta Y_t|\mathcal{F}_\theta]dB_\theta=\int_0^t D_\theta Y_t dB_\theta,\,\,0\leq t\leq T.$$
The above relation implies that $\|DY_t\|^2={\rm Var}(Y_t)$ and $\delta\left(\frac{DY_t}{\|DY_t\|^2}\right)=\frac{Y_t}{{\rm Var}(Y_t)}.$ So we obtain
\begin{align*}
E[\varphi(\tilde{X}_{\varepsilon,t})]-E[\varphi(Y_t)]&=\frac{1}{{\rm Var}(Y_t)}E\left[Y_t\int_{Y_t}^{\tilde{X}_{\varepsilon,t}} \varphi(z)dz\right]-\frac{1}{{\rm Var}(Y_t)}E\left[\varphi(\tilde{X}_{\varepsilon,t}) \langle D\tilde{X}_{\varepsilon,t}-DY_t, DY_t \rangle\right].
\end{align*}
By the estimate (\ref{DXY}) and the limit (\ref{ct6}), we have $\frac{\tilde{X}_{\varepsilon,t}-Y_t}{\varepsilon}\to\frac{1}{2}Z_t$ in $L^2(\Omega)$ as $\varepsilon\to 0$
	 and $\max\limits_{\varepsilon>0}\frac{E\|D\tilde{X}_{\varepsilon,t}-DY_t\|^2}{\varepsilon^2}<\infty.$
	 Thus, it follows from Lemma $1.2.3$ in \cite{nualartm2} that $DZ_t$ exists for every $t\in [0,T]$ and
	 $\frac{D\tilde{X}_{\varepsilon,t}-DY_t}{\varepsilon}$ weakly converges to $\frac{DZ_t}{2}$ in $L^2(\Omega \times [0, T ])$ as $\varepsilon\to 0.$ Then, for every $\varepsilon\in(0,1),$ we have
\begin{align}
&\frac{E[\varphi(\tilde{X}_{\varepsilon,t})]-E[\varphi(Y_t)]}{\varepsilon}-\frac{1}{2{\rm Var}(Y_t)}E\left[\varphi(Y_t)Z_tY_t\right]+\frac{1}{2{\rm Var}(Y_t)}E\left[\varphi(Y_t)\langle DZ_t, DY_t \rangle\right]\notag\\
&=\frac{1}{{\rm Var}(Y_t)}E\left[\bigg(\frac{1}{\varepsilon}\int_{Y_t}^{\tilde{X}_{\varepsilon,t}} \varphi(z)dz-\frac{1}{2}\varphi(Y_t)Z_t\bigg)Y_t\right]\notag\\
&-\frac{1}{{\rm Var}(Y_t)}E\left[(\varphi(\tilde{X}_{\varepsilon,t}) -\varphi(Y_t))\bigg\langle \frac{D\tilde{X}_{\varepsilon,t}-DY_t}{\varepsilon}, DY_t\bigg\rangle\right]\notag\\
&-\frac{1}{{\rm Var}(Y_t)}E\left[\varphi(Y_t)\bigg\langle \frac{D\tilde{X}_{\varepsilon,t}-DY_t}{\varepsilon}-\frac{DZ_t}{2}, DY_t\bigg\rangle\right],\,\,0<t\leq T.\label{ct5}
\end{align}
For the first addend in the right hand side of (\ref{ct5}) we have
\begin{align*}
&\frac{1}{\varepsilon}\int_{Y_t}^{\tilde{X}_{\varepsilon,t}} \varphi(z)dz-\frac{1}{2}\varphi(Y_t)Z_t=\frac{\tilde{X}_{\varepsilon,t}-Y_t}{\varepsilon}\int_0^1 \varphi(Y_t+z(\tilde{X}_{\varepsilon,t}-Y_t))dz-\frac{1}{2}\varphi(Y_t)Z_t\\
&=\bigg(\frac{\tilde{X}_{\varepsilon,t}-Y_t}{\varepsilon}-\frac{Z_t}{2}\bigg)\int_0^1 \varphi(Y_t+z(\tilde{X}_{\varepsilon,t}-Y_t))dz+\frac{Z_t}{2}\int_0^1 \left(\varphi(Y_t+z(\tilde{X}_{\varepsilon,t}-Y_t))-\varphi(Y_t)\right)dz,
\end{align*}
and hence,
\begin{align*}
E\left|\bigg(\frac{1}{\varepsilon}\int_{Y_t}^{\tilde{X}_{\varepsilon,t}} \varphi(z)dz-\frac{1}{2}\varphi(Y_t)Z_t\bigg)Y_t\right|&\le \|\varphi\|_{\infty}E\left|\left(\frac{\tilde{X}_{\varepsilon,t}-Y_t}{\varepsilon}-\frac{Z_t}{2}\right)Y_t\right|\\
&+\frac{1}{2}E\left|Z_tY_t\int_0^1\left(\varphi(Y_t+z(\tilde{X}_{\varepsilon,t}-Y_t))-\varphi(Y_t)\right)dz\right|.
\end{align*}
Because the random variables $Y_t$ and $Z_t$ belong to $L^2(\Omega),$ recalling the limit (\ref{ct6}), we have
$$\lim\limits_{\varepsilon\to 0}E\left|\bigg(\frac{\tilde{X}_{\varepsilon,t}-Y_t}{\varepsilon}-\frac{Z_t}{2}\bigg)Y_t\right|=0.$$
By the dominated convergence theorem and the fact that $\tilde{X}_{\varepsilon,t}\to Y_t,$ we also have
$$\lim\limits_{\varepsilon\to 0}E\left|Z_tY_t\int_0^1 \left(\varphi(Y_t+z(\tilde{X}_{\varepsilon,t}-Y_t))-\varphi(Y_t)\right)dz\right|=0.$$
So it holds that
\begin{equation}\label{ct7}\lim\limits_{\varepsilon\to 0}E\bigg[\bigg(\frac{1}{\varepsilon}\int_{Y_t}^{\tilde{X}_{\varepsilon,t}} \varphi(z)dz-\frac{1}{2}\varphi(Y_t)Z_t\bigg)Y_t\bigg]=0.
\end{equation}
For the second addend in the right hand side of (\ref{ct5}) we have
\begin{align*}
E&\left[(\varphi(\tilde{X}_{\varepsilon,t}) -\varphi(Y_t))\bigg\langle \frac{D\tilde{X}_{\varepsilon,t}-DY_t}{\varepsilon}, DY_t\bigg\rangle\right]\leq \frac{1}{\sqrt{{\rm Var}(Y_t)}} E\left[\frac{|\varphi(\tilde{X}_{\varepsilon,t}) -\varphi(Y_t)|\| D\tilde{X}_{\varepsilon,t}-DY_t\|}{\varepsilon}\right]\\
&\leq \frac{1}{\sqrt{{\rm Var}(Y_t)}}(E|\varphi(\tilde{X}_{\varepsilon,t}) -\varphi(Y_t)|^2)^{\frac{1}{2}}\bigg(\frac{E\| D\tilde{X}_{\varepsilon,t}-DY_t\|^2}{\varepsilon^2}\bigg)^{\frac{1}{2}}.
\end{align*}
Once again, by the estimate (\ref{DXY}) and the dominated convergence theorem, we deduce
\begin{equation}\label{ct8}
\lim\limits_{\varepsilon\to 0}E\left[(\varphi(\tilde{X}_{\varepsilon,t}) -\varphi(Y_t))\bigg\langle \frac{D\tilde{X}_{\varepsilon,t}-DY_t}{\varepsilon}, DY_t\bigg\rangle\right]=0.
\end{equation}
For the last addend in the right hand side of (\ref{ct5}) we have
\begin{equation}\label{ct9}
\lim\limits_{\varepsilon\to 0}E\left[\varphi(Y_t)\bigg\langle \frac{D\tilde{X}_{\varepsilon,t}-DY_t}{\varepsilon}-\frac{DZ_t}{2}, DY_t\bigg\rangle\right]=0.
\end{equation}
because $\frac{D\tilde{X}_{\varepsilon,t}-DY_t}{\varepsilon}$ weakly converges to $\frac{DZ_t}{2}$ in $L^2(\Omega \times [0, T ])$ as $\varepsilon\to 0.$  Combining (\ref{ct7}), (\ref{ct8}) and (\ref{ct9}) yields
$$\lim\limits_{\varepsilon\to 0}\frac{E[\varphi(\tilde{X}_{\varepsilon,t})]-E[\varphi(Y_t)]}{\varepsilon}=\frac{1}{2{\rm Var}(Y_t)}E\left[\varphi(Y_t)Z_tY_t\right]
-\frac{1}{2{\rm Var}(Y_t)}E\left[\varphi(Y_t)\langle DZ_t, DY_t \rangle\right].$$
Moreover, by using the duality relationship (\ref{ct*}), we have
\begin{align*}
\delta (Z_tDY_t)=Z_tY_t-\langle DZ_t, DY_t \rangle
\end{align*}
Hence,
\begin{align*}
\lim\limits_{\varepsilon\to 0}\frac{E[\varphi(\tilde{X}_{\varepsilon,t})]-E[\varphi(Y_t)]}{\varepsilon}=\frac{1}{2{\rm Var}(Y_t)}E\left[\varphi(Y_t)\delta (Z_tDY_t)\right].
\end{align*}
The proof of Theorem \ref{theorem2} is complete.

\section{Stochastic Volterra equation with fractional Brownian motion kernel}\label{tglo}
Given $H\in (0,1),$ we consider the kernel
\[
	K_H(t,s):=\frac{(t-s)^{H-\frac{1}{2}}}{\Gamma(H-1/2)\sqrt{V_H}}F(H-1/2,1/2-H,H+1/2,1-t/s),\quad\text{$0\leq s< t\leq T$},	\]
where $ F$ is the Gauss hypergeometric function, $\Gamma$ denotes the usual Gamma function and $ V_H=\frac{\Gamma(2-2H)\cos\pi H}{\pi H(1-2H)}.$  We recall that the kernel $K_H$ can be used to define a fractional Brownian motion  as follows (see e.g. \cite{Coutin2001})
\begin{equation}\label{densityCIR02}
		B^H_t=\int_0^t K_H(t,s)d B_s,\,\,0\leq t\leq T.
\end{equation}
Moreover, the covariance function of $B^H$ is given by
	$$R_H(t,s):=E[B^H_t B^H_s]=\frac{1}{2}(t^{2H}+s^{2H}-|t-s|^{2H}),\,\,0\leq s\leq  t\leq T.$$
In this section, we consider the following stochastic Volterra equation with the fractional Brownian motion kernel $K_H$
\begin{equation}\label{pt11}
X_{\varepsilon,t}=x_0+\int_0^tK_H(t,s)b(s,X_{\varepsilon,s})ds+\varepsilon\int_0^tK_H(t,s)\sigma(s,X_{\varepsilon,s})dB_s,
\end{equation}
where $\varepsilon\in(0,1)$, the initial data $x_0\in\R$ and $b,\sigma: [0,T]\times\mathbb{R}\to \R$ are  measurable functions having linear growth, that is, there exists $M > 0$ such that
$$|b(t,x)|+|\sigma(t,x)|\le M(1+|x|)\,\,\forall x\in \R, 0\le t\le T.$$
Additionally, $b$ and $\sigma$ are twice differentiable functions in $x$ with the partial derivatives bounded by $M.$

The equation (\ref{pt11}) was studied first in \cite{Coutin2001}, the large and moderate deviation results have been discussed in \cite{Jacquier2022,Li2017,Zhang2008}. Here we apply the results of Section \ref{fkm} to bound the total variation distance for the fluctuation process of $X_{\varepsilon,t}.$
We define
\begin{align}
&x_t=x_0+\int_0^tK_H(t,s)b(s,x_s)ds,\mbox{  }0\le t\le T\notag.\\
&\tilde{X}_{\varepsilon,t}:=\frac{X_{\varepsilon,t}-x_t}{\varepsilon},\mbox{  }0\le t\le T,\notag
\end{align}
and
\begin{equation}\label{pt31}
Y_t=\int_0^tK_H(t,s)b'(s,x_s)Y_sds+\int_0^tK_H(t,s)\sigma(s,x_s)dB_s,\mbox{  }0\le t\le T.
\end{equation}

\begin{thm}For every $H\in (0,1),$ it holds that
\begin{equation}\label{aik}
d_{TV}(\tilde{X}_{\varepsilon,t},Y_t)\le \frac{Ct^{2H}\varepsilon}{\sqrt{{\rm Var}(Y_t)}}\,\,\mbox{  }\forall \varepsilon\in(0,1),\mbox{  }0< t\le T,
\end{equation}
where $C$ is a positive constant not depending on $t$ and $\varepsilon.$ When $H\geq \frac{1}{2}$ and $\sigma(t,x)\geq \sigma_0$ for some $\sigma_0>0$ and for all $(t,x)\in [0,T]\times \mathbb{R},$ we also have
\begin{equation}\label{aikh}
d_{TV}(\tilde{X}_{\varepsilon,t},Y_t)\le Ct^{H}\varepsilon\,\,\mbox{  }\forall \varepsilon\in(0,1),\mbox{  }0< t\le T.
\end{equation}
\end{thm}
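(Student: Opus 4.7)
The plan is to view equation (\ref{pt11}) as a special case of the general equation (\ref{hfk}), with the three-argument coefficients of Section \ref{fkm} taken to be $(t,s,x) \mapsto K_H(t,s)b(s,x)$ and $(t,s,x) \mapsto K_H(t,s)\sigma(s,x)$, and then to apply Theorem \ref{theorem1} directly. The standing hypotheses on $b,\sigma$ (linear growth, and first and second derivatives uniformly bounded by $M$) allow Assumptions \ref{assum1} and \ref{assum2} to be verified with $k_1(t,s) = k_2(t,s) = k_3(t,s) = c\,K_H(t,s)$ for a constant $c$ depending only on $M$ and on the sup-norm of the deterministic limit $x_\cdot$ on $[0,T]$. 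The integrability condition (\ref{udp}) is then satisfied for a suitable $\alpha > 1$ chosen according to the local behaviour of $K_H$ near $s=0$ and $s=t$.

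The key quantitative input is the identity $\int_0^u K_H^2(u,s)\,ds = u^{2H}$, which follows from the representation $B^H_u = \int_0^u K_H(u,s)\,dB_s$ combined with $E[(B^H_u)^2] = u^{2H}$. Hence $\sup_{0\leq u\leq t}\int_0^u K_H^2(u,s)\,ds = t^{2H}$, and substituting into the bound of Theorem \ref{theorem1} gives
\[
d_{TV}(\tilde{X}_{\varepsilon,t},Y_t) \leq (2c^2 t^{2H})^{1/2}(c^2 t^{2H})^{1/2}\,\frac{C\varepsilon}{\sqrt{{\rm Var}(Y_t)}} \leq \frac{C' t^{2H}\varepsilon}{\sqrt{{\rm Var}(Y_t)}},
\]
which is the first claim (\ref{aik}).

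For the improved bound (\ref{aikh}), the additional task is to prove ${\rm Var}(Y_t) \geq c\,t^{2H}$ on $(0,T]$ when $H\geq 1/2$ and $\sigma \geq \sigma_0 > 0$. My plan is to decompose $Y_t = Z_t + R_t$ with $Z_t = \int_0^t K_H(t,s)\sigma(s,x_s)\,dB_s$ and $R_t = \int_0^t K_H(t,s)b'(s,x_s)Y_s\,ds$. The It\^o isometry immediately gives ${\rm Var}(Z_t) \geq \sigma_0^2 t^{2H}$. Cauchy-Schwarz applied to $R_t$, together with the upper bound ${\rm Var}(Y_s) \leq C s^{2H}$ that follows from a standard Gronwall argument on the linear Volterra equation for $Y$, gives $\|R_t\|_{L^2(\Omega)} \leq C'' t^{2H+1/2}$. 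The triangle inequality in $L^2(\Omega)$ then produces $\sqrt{{\rm Var}(Y_t)} \geq \sigma_0 t^H - C'' t^{2H+1/2} = t^H(\sigma_0 - C'' t^{H+1/2})$, which already yields ${\rm Var}(Y_t) \geq (\sigma_0/2)^2 t^{2H}$ for $t$ up to some threshold $t_0 > 0$.

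The main obstacle is extending the lower bound past $t_0$, where the triangle-inequality estimate breaks down once the drift contribution $R_t$ can dominate $Z_t$ in $L^2$. I plan to argue that ${\rm Var}(Y_t)$ is continuous on $[0,T]$ (by $L^2$-continuity of the Volterra solution and of $K_H(\cdot,s)$ in $t$) and strictly positive on $(0,T]$: if ${\rm Var}(Y_t) = \int_0^t(D_\theta Y_t)^2\,d\theta$ vanished, then $D_\theta Y_t = 0$ for almost every $\theta$, and substituting this into the Volterra equation (\ref{DY}) for $D_\theta Y_t$ would force $K_H(t,\theta)\sigma(\theta,x_\theta) = 0$, contradicting $K_H(t,\theta) > 0$ for $\theta < t$ (which uses $H \geq 1/2$) together with $\sigma \geq \sigma_0 > 0$. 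Compactness of $[t_0,T]$ then yields ${\rm Var}(Y_t) \geq c_0 > 0$ uniformly there, and since $t^{2H} \leq T^{2H}$ one obtains ${\rm Var}(Y_t) \geq (c_0/T^{2H})\,t^{2H}$. Combining the small- and large-$t$ bounds and plugging into (\ref{aik}) yields (\ref{aikh}).
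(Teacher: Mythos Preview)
Your derivation of (\ref{aik}) is correct and matches the paper's: both verify Assumptions \ref{assum1}--\ref{assum2} with $k_i$ proportional to $K_H$, use $\int_0^u K_H^2(u,s)\,ds=u^{2H}$, and invoke Theorem \ref{theorem1}.

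For (\ref{aikh}), your small-$t$ estimate via the triangle inequality is fine, but the strict-positivity step in the large-$t$ argument has a gap. From ${\rm Var}(Y_t)=\int_0^t(D_\theta Y_t)^2\,d\theta=0$ you correctly get $D_\theta Y_t=0$ for a.e.\ $\theta$, and then you claim that the equation
\[
D_\theta Y_t=K_H(t,\theta)\sigma(\theta,x_\theta)+\int_\theta^t K_H(t,s)b'(s,x_s)D_\theta Y_s\,ds
\]
forces $K_H(t,\theta)\sigma(\theta,x_\theta)=0$. It does not: the integral on the right involves $D_\theta Y_s$ for $s\in(\theta,t)$, not $D_\theta Y_t$; knowing that $D_\theta Y_t$ vanishes (for one fixed $t$, as $\theta$ varies) says nothing about $D_\theta Y_s$ at intermediate times, so the integral term may well cancel the forcing term and no contradiction follows. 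Repairing this would require a pointwise bound of the type $|D_\theta Y_s|\le CK_H(s,\theta)$---but establishing that (via Gronwall and the estimate $\int_\theta^t K_H^2(t,s)\,ds\le(t-\theta)^{2H}$) is precisely the technical work that the paper's own argument carries out.

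The paper bypasses the small/large-$t$ dichotomy altogether. It first proves $|D_\theta Y_s|^2\le CK_H^2(s,\theta)$ together with the two-sided kernel estimates $c(t-s)^{2H}\le\int_s^t K_H^2(t,\theta)\,d\theta\le(t-s)^{2H}$ (using the explicit form of $K_H$ for $H\ge\tfrac12$), and then localizes $\int_0^t(D_\theta Y_t)^2\,d\theta$ to the window $[t(1-h),t]$ and applies $(a+b)^2\ge\tfrac12 a^2-b^2$ to obtain
\[
{\rm Var}(Y_t)\ge(th)^{2H}\bigl(c_H^\ast-C(Th)^{2H+1}\bigr).
\]
Choosing $h\in(0,1]$ small enough, \emph{independently of $t$}, yields ${\rm Var}(Y_t)\ge c\,t^{2H}$ uniformly on $(0,T]$ with an explicit constant, and plugging this into (\ref{aik}) gives (\ref{aikh}) directly.
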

\begin{proof} We recall from \cite{Decreusefond1999} that $0\leq K_H(t,s)\leq c(t-s)^{H-\frac{1}{2}}s^{-|H-\frac{1}{2}|}$ for some $c>0.$ Hence, it is easy to verify that the coefficients of the equation (\ref{pt11}) satisfy Assumptions  \ref{assum1} and \ref{assum2} with $k_1(t,s)=k_2(t,s)=k_3(t,s)=MK_H(t,s)$ and
\begin{align*}
\sup_{0\le u\le t}\int_0^uk^2_i(u,s)ds&= M^2\sup_{0\le u\le t}\int_0^uK_H^2(u,s)ds \\
&=M^2\sup_{0\le u\le t}E|B^H_u|^2=M^{2}t^{2H},\,i=1,2,3.
\end{align*}
So the estimate (\ref{aik}) follows directly from Theorem \ref{theorem1}.

We now consider the case $H\geq \frac{1}{2}.$ In this case, the kernel $K_H$ reduces to the following (see e.g. \cite{nualartm2} pp. 277-279)
$$
K_H(t,s):=c_H\,s^{1/2-H}\int_s^t (u-s)^{H-\frac{3}{2}}u^{H-1/2}du,\quad\text{$0\leq s< t\leq T,$}
$$
where $
c_H=\sqrt{\frac{H(2H-1)}{\beta(2-2H,H-1/2)}}$  with $\beta$ is the Beta function. We have the following estimates
\begin{align}
\int_s^{t}K^{2}_H(t,\theta)d\theta&=c_H^2\int_s^t \theta^{1-2H}\left(\int_\theta^t(u-\theta)^{H-3/2}u^{H-1/2}du\right)^2d\theta\notag\\
&\ge c_H^2\int_s^t \left(\int_\theta^t(u-\theta)^{H-3/2}du\right)^2d\theta=\frac{2c_H^2}{H(2H-1)^2}(t-s)^{2H}\label{41}
\end{align}
and
\begin{align}
|t-s|^{2H}&=E|B_t^H-B_s^H|=E\left(\int_0^{s}\big(K_H(t,\theta)-K_H(s,\theta)\big)dB_\theta+\int_s^tK_H(t,\theta)dB_\theta\right)^2\notag\\
&=E\left(\int_0^{s}\big(K_H(t,\theta)-K_H(s,\theta)\big)dB_\theta\right)^2+E\left(\int_s^tK_H(t,\theta)dB_\theta\right)^2\notag\\
&\ge E\left(\int_s^tK_H(t,\theta)dB_\theta\right)^2=\int_s^tK_H^2(t,\theta)d\theta,\, 0\le s\le t\le T.\label{42}
\end{align}
On the other hand, from the equation (\ref{pt31}) we have
\begin{align*}
D_\theta Y_t=K_H(t,\theta)\sigma(\theta,x_\theta)+\int_\theta^tK_H(t,s)b'(s,x_s)D_\theta Y_sds,\,\,0\leq \theta\leq t\leq T.
\end{align*}
Hence, for some $C>0,$
\begin{align*}
|D_\theta Y_t|^2&\le 2M^2(1+|x_\theta|)^2K_H^2(t,\theta)+2M^2\int_\theta^tK_H^2(t,s)ds\int_\theta^t|D_\theta Y_s|^2ds\\
&\le C K_H^2(t,\theta)+2M^2|t-\theta|^{2H}\int_\theta^t|D_\theta Y_s|^2ds\\
&\le C K_H^2(t,\theta)+2M^2T^{2H}\int_\theta^t|D_\theta Y_s|^2ds\\
&\le C K_H^2(t,\theta)+C\int_\theta^t|D_\theta Y_s|^2ds.
\end{align*}
By Gronwall's lemma and the fact that the function $s\to K_H(s,\theta)$ is non-decreasing, we have
\begin{align*}
|D_\theta Y_t|^2&\le C K_H^2(t,\theta)+C\int_\theta^te^{C(s-\theta)}K^2_H(s,\theta)ds\\
&\le C K_H^2(t,\theta)+Ce^{CT}\int_\theta^tK^2_H(s,\theta)ds\\
&\le C K_H^2(t,\theta),\,0\le \theta\le t\le T,
\end{align*}
where $C$ is a positive constant not depending on $t.$ For $h\in (0,1],$ by the fundamental inequality $(a+b)^2\ge \frac{1}{2}a^2-b^2$, we have
\begin{align*}
{\rm Var}(Y_t)&=\int_0^{t}\left|D_\theta Y_t\right|^2d\theta \ge\int_{t(1-h)} ^{t}\left|D_\theta Y_t\right|^2d\theta\\
&\geq \frac{1}{2}\int_{t(1-h)} ^{t}K^2_H(t,\theta)\sigma^2(\theta,x_\theta)d\theta-\int_{t(1-h)} ^{t}\bigg(\int_\theta^tK_H(t,s)b'(s,x_s)D_\theta Y_sds\bigg)^2d\theta\\
&\ge \frac{\sigma_0^2}{2} \int_{t(1-h)} ^{t}K_H^2(t,\theta)d\theta-C\int_{t(1-h)} ^{t}\bigg(\int_\theta^tK_H^{2}(t,s)ds \int_\theta^tK_H^2(s,\theta)ds\bigg) d\theta.
\end{align*}
From (\ref{41}) and (\ref{42}), we deduce
\begin{align*}
{\rm Var}(Y_t)&\ge \frac{\sigma_0^2c_H^2}{H(2H-1)^2}(th)^{2H}-C\int_{t(1-h)} ^{t}|t-\theta|^{2H}\int_\theta^tK_H^2(s,\theta)dsd\theta\\
&\ge \frac{\sigma_0^2c_H^2}{H(2H-1)^2}(th)^{2H}-C(th)^{2H}\int_{t(1-h)} ^{t}\int_{t(1-h)}^s K_H^2(s,\theta)d\theta ds\\
&\ge  \frac{\sigma_0^2c_H^2}{H(2H-1)^2}(th)^{2H}-C(th)^{4H+1}=(th)^{2H}(c_H^{*}-C(Th)^{2H+1}),
\end{align*}
where $c_H^{*}=\frac{\sigma_0^2c_H^2}{H(2H-1)^2}.$ By choosing $h=1\wedge \sqrt[2H+1]{\frac{c_H^{*}}{2CT^{2H+1}}},$ we get
$$ {\rm Var}(Y_t)\ge \frac{c_H^{*}t^{2H}}{2},\,\,0\leq t\leq T. $$
Inserting the above relation into (\ref{aik}) gives us (\ref{aikh}). The proof of the theorem is complete.
\end{proof}




%






\end{document}